% ************************************************************************
%
% Author informations
%

%\maketitle
% end top matter

% Revised version Oct-96
% AmsLaTeX
%

\documentclass[11pt,a4paper]{amsart}

%%%%%%%%%%%%%%%%%

\usepackage{amssymb,amsmath,epsfig,graphics,mathrsfs}
\usepackage{graphicx}

\usepackage{fancyhdr}
\pagestyle{fancy}
\fancyhead[RO,LE]{\small\thepage}
\fancyhead[LO]{\small \emph{\nouppercase{\rightmark}}}
\fancyhead[RE]{\small \emph{\nouppercase{\rightmark}}}
\fancyfoot[L,R,C]{}

% If you don't like the horizontal line beneath the page
% number and the section's title, change the above
% \renewcommand{\headrulewidth}{1pt}  to
% \renewcommand{\headrulewidth}{0pt}
% 0pt tells Latex not to draw a line.

%\usepackage{amssymb,amsmath}

\usepackage{hyperref}
\hypersetup{
%Colours links instead of ugly boxes
 colorlinks   = true,
%Colour for external hyperlinks
 urlcolor     = blue,
%Colour of internal links
 linkcolor    = blue,
%Colour of citations
 citecolor   = red ,
%Generate book marks
 bookmarksopen=true
}

\newtheorem{thm}{Theorem}[section]
\theoremstyle{plain}
\newtheorem{lem}[thm]{Lemma}

\theoremstyle{definition}
\newtheorem{defi}{Definition}[section]
\newtheorem{rem}[thm]{Remark}

\setlength{\topmargin}{-0.2in}
\setlength{\oddsidemargin}{0.3in}
\setlength{\evensidemargin}{0.3in}
\setlength{\textwidth}{6.3in}
\setlength{\rightmargin}{0.7in}
\setlength{\leftmargin}{-0.5in}
\setlength{\textheight}{9.1in}

\newcommand{\rN}{\mathbb{R}^{N}}

\newcommand{\ro}{\mathbb{R}}
\newcommand{\Om}{\Omega}

\newcommand{\mcal}[1]{\mathcal{#1}}
\newcommand{\md}[1]{\left|#1 \right|}
\newcommand{\nrm}[1]{\left|\left| #1 \right|\right|}
\newcommand{\bct}[1]{\left(#1\right)}

\newcommand{\ep}{\epsilon}

%%%%%%%%%%%%%%%%%%%%%%%%%%%%%%%%%%%%%%%%%%%%%%%%%%%%%%%%%%%%%%%%%%%%%%%%%%%%%%%%%%%%%%%%%%%%%%

\numberwithin{equation}{section} \allowdisplaybreaks

         \title[On the Strong unique continuation Property]{On the Strong unique continuation property of   a  degenerate elliptic operator with Hardy type potential}
\author{Agnid Banerjee, Arka Mallick}
\address{TIFR  Centre for Applicable Mathematics, Post Bag No. 6503
 Sharadanagar, \\ Bangalore 560065, India.}
\email{arkamallick02@gmail.com, agnid@math.tifrbng.res.in} 

\begin{document}

\maketitle

%\tableofcontents

\begin{abstract}
In this paper  we prove  the  strong unique continuation property  for the following degenerate elliptic equation 
\begin{equation}\label{e0}
\Delta_zu +|z|^2\partial_t^2u = Vu,\quad  (z,t) \in \mathbb{R}^N \times \mathbb{R}
\end{equation}
where the potential $V$ satisfies  either of the following growth assumptions
\begin{align}
& \md{V(z,t)} \leq  \frac{f(\rho(z,t))}{\rho(z,t)^2},\ \text{where $\rho$ is as in \eqref{gauge} and $f$ satisfies the Dini integrability condition as in \eqref{Dini}.}
\\
&  \text{or when }
\notag
\\
& \md{V(z,t)} \leq C \frac{\psi(z,t)^{\epsilon}}{\rho(z,t)^2},\  \text{for some $\epsilon>0$ with $\psi$ as in \eqref{gweight} and  $N$  even.}
\notag
\end{align}
This extends some of the previous results obtained in \cite{G} for this subfamily of Baouendi-Grushin operators. As corollaries, we obtain new unique  continuation properties for solutions $u$  to
\[
\Delta_{\mathbb{H}} u = Vu
\]
with certain symmetries as expressed  in \eqref{t} where $\Delta_{\mathbb{H}}$ corresponds to  the sub-Laplacian on  the Heisenberg group $\mathbb{H}^n$. 

\end{abstract}

%MSC2010 Classification: {\em }\\
%Keywords: {\em }

\section{Introduction}
\noindent An operator $L$  (local or non-local) is said to possess the  strong unique continuation property in the $L^{p}$ sense if  any non-trivial solution $u$ to
\[
Lu=0
\]
in a (connected) domain $\Om \subset \mathbb{R}^n$ cannot vanish to infinite order in the $L^{p}$ mean at any point in $\Om$.  We refer to  Definition \ref{iv} for the precise notion of vanishing to infinite order in the  $L^{p}$ mean.

\medskip

\noindent The fundamental role played by strong unique continuation theorems in the theory of partial differential equations is well known.  We recall that in his foundational paper in 1939( see \cite{C}), T. Carleman established the  strong unique continuation property for
\[
\Delta u = Vu
\]
in $\mathbb{R}^2$ under the assumption that $V$ is in $L^{\infty}$.  This result was subsequently extended by several mathematicians to arbitrary dimension and also to  equations with variable coefficients.  In this direction, we refer to the pioneering work of Aronszajn-Krzywicki-Szarski (see \cite{AKS}), where strong unique continuation property for elliptic operators  with Lipschitz principal part   was established by using generalization  of the estimates of Carleman.  About twenty years later a different geometric approach independent of the Carleman estimates came up in the  seminal works of Garofalo-Lin  in 1986, see \cite{GL1}, \cite{GL2}. Their method, which is based on the almost monotonicity of a generalized frequency function, allowed them to obtain new quantitative information on the zero set of solutions to divergence form elliptic equations and, in particular, encompassed the  results of \cite{AKS}. The reader should note that  such a  frequency  function approach  has its roots in the well-known work of Almgren \cite{Al} which plays a crucial role in the regularity theory of mass minimizing currents. 

\medskip

 There has also been an interest of mathematicians working  in partial differential equations and mathematical physics to focus on the unique continuation property for equations with unbounded lower order terms. Subsequent developments in this direction have  culminated with Jerison and Kenig's celebrated result on the strong unique continuation property for
 \[
 \Delta u= Vu
 \]
 with $V \in L^{n/2}_{loc}(\mathbb{R}^n)$, see \cite{JK}. 
  Their paper has inspired much progress in the subject and nowadays the picture for second order uniformly elliptic equations is almost complete. See for instance \cite{KT} where analogous results have been established for variable coefficient  operators with Lipschitz  principal part. We also  refer to the work of Y. Pan \cite{Pa}, where strong unique continuation property for elliptic equations with scaling critical Hardy type potentials has been established, as well as  to that of Chanillo-Sawyer  ( see \cite{ChS})  for  strong  unique continuation results with   potentials in the   Fefferman-Phong class. 
  
  \medskip
  
  On the contrary, not so well understood is the situation concerning sub-elliptic operators.  It turns out that unique continuation is generically not  true in such context. This follows from a counterexample due to Bahouri \cite{Bah}, where the author showed that unique continuation is not true for  even smooth and  compactly supported perturbations of the sub-Laplacian on the Heisenberg group $\mathbb{H}^n$.  The first positive result in this direction came up in the work of Garofalo-Lanconelli \cite{GLa} where among other important results, the authors showed that strong unique continuation  result holds  for solutions $u$ to 
  \[
  \Delta_{\mathbb{H}} u= Vu
  \]
  when $u$ has certain symmetries  as expressed in \eqref{t} and  with certain  growth conditions on $V$, see Theorem \ref{gl} below. Here, $\Delta_{\mathbb{H}}$  denotes the standard  sub-Laplacian on  $\mathbb{H}^n$.   We also  refer to the papers \cite{GR} and \cite{B} for the extension of the unique continuation  result in \cite{GLa}  to  Carnot groups of arbitrary step with appropriate  symmetry assumptions on  the solution  $u$ which can be thought of as  analogous to that in \eqref{t}.  It is to be noted that the results in \cite{B}, \cite{GLa} and \cite{GR} follow the circle of ideas as in the fundamental works \cite{GL1} and \cite{GL2} based on Almgren type frequency function approach.

  \medskip
  
  In a somewhat related direction,  the study of strong unique continuation  for zero order perturbations of the following degenerate  Baouendi-Grushin type operators 
  \[
  \mathcal{B}_\beta u= \Delta_{z} u + \frac{|z|^{2\beta}}{4} \Delta_{t} u,\ z \in \mathbb{R}^N, \  t \in \mathbb{R}^m,\ \beta>0
  \]
  was initiated by Garofalo in \cite{G},  where the author introduced an Almgren type  frequency function associated with $\mathcal{B}_\beta$  and proved that such a  frequency is bounded for solutions $u$  to
  \begin{equation}\label{b}
  \mathcal{B}_\beta u= Vu
  \end{equation}
  when $V$  satisfies  the following growth assumption
  \begin{equation}\label{growth}
  |V(z,t) |\leq C \frac{f(\rho(z,t))}{\rho(z,t)^2} \psi(z, t),
  \end{equation}
   for some  non-decreasing $f$ which is Dini integrable, i.e.
  \begin{equation}\label{Dini}
  \int_{0}^{R_0}\frac{f(r)}{r} < \infty,\ \text{for some $R_0>0$}
  \end{equation} and  where 
 \[
 \rho(z,t)= (|z|^{2(\beta+1)} + (\beta+1)^2 |t|^2)^{\frac{1}{2(\beta+1)}}\ \ \ \ \text{and}\ \ \ \psi(z,t)=\frac{|z|^{2\beta}}{\rho(z,t)^{2\beta}}, 
 \]
 
  Consequently  using the boundedness of such a frequency,  the author inferred that the $L^{2}$ doubling property of solutions to \eqref{b}  follows which in particular implies the strong unique continuation property. Also the case when  $V$ satisfies 
  \begin{equation}\label{gr2}
  0 < V^+(z,t) \leq C \frac{\psi}{\rho(z,t)^2},\\\ 0 \leq V^-(z,t) \leq \delta \frac{\psi}{\rho(z,t)^2},
  \end{equation}
  for some  $\delta>0$ small enough depending on $N, m$ and $\beta$ was studied  in the very same paper and in which case a slightly weaker version of unique continuation property was established( See Theorem 4.4 in \cite{G}). 
The results in \cite{G} were subsequently extended in \cite{GV} and \cite{B} to variable coefficient Baouendi-Grushin operators.   Note that the weight $\psi$ in \eqref{growth} /\eqref{gr2} degenerates on the submanifold $\{z=0\}$ and so the result in \cite{G} doesn't allow to take $V \in L^{\infty}$. The natural appearance of this degenerate weight $\psi$  can be seen from the fact that if
  \[
  u= f(\rho(z, t))
  \]
  then 
  \[
  Lu= \psi \left( f''(\rho) + \frac{Q-1}{\rho} f'(\rho) \right),\  \text{where $Q= N+ (\beta+1)m$}.
  \]
When $\beta=1, m=1$,  we note that the operator $\mathcal{B}_\beta$ is intimately connected to the sub-Laplacian on $\mathbb{H}^{n}$.  In order to see such a connection, we note that  with respect to the standard coordinates $(z, t)= (x, y, t)$ on $\mathbb{H}^n$( $=$ $\mathbb{R}^{n} \times \mathbb{R}^n \times \mathbb{R})$, the sub-Laplacian is given by
  \begin{equation}
  \Delta_{\mathbb{H}} = \Delta_z + \frac{|z|^2}{4} \partial_{t}^2 + \sum_{i=1}^n \partial_t (y_j \partial_{x_j} - x_j \partial_{y_j})
  \end{equation}
  Now if $u$ is a smooth function that annihilates the vector field  $T=\sum_{i=1}^n  (y_j \partial_{x_j} - x_j \partial_{y_j})$ (i.e., $Tu \equiv 0$), then  ( upto a normalization factor of $4$)  we have that 
  \[
  \Delta_{\mathbb{H}} u= \mathcal{B}_1 u,\ \text{for $m=1$}. 
  \]
  Note that it is not difficult to recognize that 
  \begin{equation}\label{t}
  Tu=0\ \text{iff}\ u(e^{i\theta} z, t)= u(z, t).
  \end{equation}
  Said differently, $Tu=0$ if and only if $u$ is invariant with respect to the natural action of the torus on $\mathbb{H}^n$.  
  \medskip
  
  In this  very same case (i..e when $\beta=1$ and $m=1$),  by establishing  very delicate $L^{p}-L^{q}$  Carleman estimates  as stated  in \eqref{LpLqCarEst}, Garofalo and Shen in \cite{GarShen}   obtained  strong unique continuation for \eqref{e0} when $V \in L^{r}_{loc}(\mathbb{R}^{N+1})$ for  $r> N=Q-2$  when $N$ is even and $r>\frac{2N^2}{N+1}$  when $N$ is odd and hence succeeded in removing the degenerate weight $\psi$ from their  growth assumption  unlike  that in \eqref{growth} or \eqref{gr2} for this subfamily of $\{\mathcal{B}_\beta\}$.  The reader should note that the approach in \cite{GarShen} in turn is inspired by  that  of Jerison's work in \cite{J} where a simpler proof of the Jerison-Kenig's Carleman inequality was discovered. 
  
  \medskip
  
  Now for a historical account,  we   recall that a more general class of operators  modelled on $\mathcal{B}_\beta$ was first introduced by Baouendi who studied the Dirichlet problem in weighted Sobolev spaces ( see \cite{Ba}). Subsequently, Grushin in \cite{Gr1} and \cite{Gr2} studied the hypoellipticity of the operator $\mathcal{B}_\beta$ when $\beta \in \mathbb{N}$ and showed that this property is however affected by the addition of lower order terms. We  would also like to  refer to the papers \cite{FGW} and \cite{FL} for H\"older regularity of weak solutions to    general equations modelled on $\mathcal{B}_{\beta}$.  Remarkably,  the operator of Baouendi also played an important role in the recent work \cite{KPS} on the higher regularity of the free boundary in the classical Signorini problem.   We would also like to mention that a version of the Almgren type monotonicity formula for $\mathcal{B}_\beta$ played an extensive role  in  \cite{CSS} on the obstacle problem for the fractional Laplacian.    \medskip
  
  Therefore  given the relevance of these sub-ellliptic operators in various contexts,  in this paper we study the strong unique continuation for  zero order perturbations of the operator $\mathcal{B}_1$ (when $m=1$) with singular potentials of Hardy type.    We first show  that  quite interestingly,   by employing   the $L^{p}-L^{q}$ type Carleman estimate derived in \cite{GarShen}, one can prove strong unique continuation property for \eqref{e0} (see Theorem \ref{sucpGr} below) when the potential $V$ satisfies the following  growth condition
\begin{align}\label{conPot}
\md{V(z,t)} \leq  \frac{f(\rho(z,t))}{\rho(z,t)^2} \text{ for a.e.  $(z,t) \in B_{R_0}$ and $f$ satisfies   \eqref{Dini}},
\end{align}  
i.e. we show that in the growth condition \eqref{growth} as  treated in \cite{G}, the degenerate weight $\psi$ can be removed from the growth assumption for this subfamily of operators for the validity of strong unique continuation. Typical representatives of $f$ satisyfing \eqref{Dini} are $f(r)=Cr^{\epsilon}$ ($\epsilon>0$) or $f(r)= C|\text{log}(1/r)|^{-\alpha}$ ($\alpha>1$) and therefore the growth assumption in \eqref{conPot} can be thought of as an ``almost" Hardy type growth  condition.

\medskip

Then by using the $L^{2}$  estimates for   projection operators $P_k$  as established in \cite{GarShen}( (see \eqref{projOp} for the definition of $P_k$) , we   establish a certain $L^{2}-L^{2}$ type Carleman estimate  where we obtain a particular  asymptotic behaviour of the constant involved in terms of  a parameter $s$ which corresponds to the exponent of the singular weight in the Carleman inequality ( See the estimate in  Theorem \ref{L2L2CarEst} below).   Using such an asymptotic behaviour of the constant,  we adapt   an  argument     in \cite{Pa} to our sub-elliptic setting and consequently obtain  an    analogous strong unique continuation  result  for   equation \eqref{e0} when the potential $V$ satisfies the following Hardy type growth condition

 \begin{align}\label{conPot1}
\md{V(z,t)} \leq C \frac{\psi(z,t)^{\epsilon}}{\rho(z,t)^2} \text{ for a.e.  $(z,t) \in B_{R_0}$,  $\epsilon >0$ and $N$  even,}
\end{align}
which again improves the growth assumption in \eqref{gr2} for this subclass  of Bauoendi-Grushin operators (see Theorem \ref{sucp2} below and also Theorem \ref{sucp4} for the corresponding result when $N$ is odd). The reader should note that our result Theorem \ref{sucp2} is new  even for $\epsilon=1$ because the growth assumption in \eqref{gr2} requires  $\delta$ to be sufficiently  small.    We would also like to mention that although we closely follow the approach  of \cite{Pa} in parts, it has nonetheless required some delicate modifications  in our setting  as the reader can see in the proof of Theorem \ref{sucp2}  in Section 4.  This is essentially due to the presence of the degenerate weight $\psi$   in our Carleman estimates. We  also note that  a generic potential $V$  satisfying our growth conditions  in \eqref{conPot} or \eqref{conPot1} need not be in $L^{r}$  for the range of $r$ dealt in  \cite{GarShen} and therefore  the class of potentials covered in our work are somewhat complementary to that treated in  \cite{GarShen}.   
\medskip

A few  open problems as well as remarks   are listed  in order:
\begin{itemize}
\item[1)] We note that  in  our growth condition \eqref{conPot1}, the parameter  $\epsilon$ which corresponds to the exponent of $\psi$ can be taken  arbitrarily small. Said differently,  we show in our Theorem \ref{sucp2}  that  the degenerate weight  $\psi$ can be ``almost" removed in the  Hardy type growth assumption.   It however  remains to be seen whether in \eqref{conPot1},  one  can get rid of  the  degenerate weight $\psi$ completely from the growth condition, i.e.  whether   one can take $\epsilon=0$ in \eqref{conPot1}  so that Theorem \ref{sucp2} continues to hold.  This appears  to be a challenging open problem to which we would like to come back in a future study.   
\item[2)] It also appears  interesting to   look at generalization of our unique continuation results as well as that of \cite{GarShen}  for the case when $m>1$, i.e. for  equations of the type
\begin{equation}\label{e1}
 \Delta_z u + |z|^2 \Delta_{t} u = Vu,\ (z, t) \in \mathbb{R}^N \times \mathbb{R}^m
\end{equation}
 The reader should note that this  would consequently have  similar applications to unique continuation properties for  sub-Laplacian type equations  on $H$- type groups ( see Section 9 in \cite{GR} for a detailed account on this aspect).  This seems to be a   challenging  issue as well  where one would need  to establish  estimates similar to that stated in Theorem \ref{wtdL2est}   for   appropriate projection operators.
\item[3)] It would also be interesting to look at  backward uniqueness results for zero order perturbations of  the  parabolic counterpart of the operators  as in \eqref{e0} or  more generally of the ones  as in \eqref{e1}.  We refer the reader to the interesting papers \cite{Po}, \cite{E}, \cite{EV} and  \cite{ESS} for the corresponding results  in the case of  uniformly parabolic  operators. 
\end{itemize}

\medskip

The paper is organized as follows. In Section 2, we introduce certain relevant notions and gather some known results. In Section 3, we establish a certain  $L^{2}-L^{2}$ type Carleman estimate  with a particular asymptotic behavior of the corresponding constant  as mentioned above using which we prove our strong unique continuation result Theorem \ref{sucp2}.  In Section 4, we  prove our main results Theorem \ref{sucpGr} and Theorem \ref{sucp2} using the Carleman estimates in Section 3.  Finally in Section 5, we  show application of our results  to a new  unique continuation property for stationary Schr\"odinger equations on the Heisenberg group $\mathbb{H}^n$.

\medskip

\textbf{Acknowledgment:} One of us, A.B.  would like to thank   Prof. Nicola Garofalo for several  suggestions and feedback related to this work.

\section{Preliminaries} 
%\subsection{Polar Coordinates}
%In this section we will recollect some basic results which will be used to establish our main theorem. 
The content of this section is essentially borrowed from \cite{GarShen}. The main goal is to introduce a suitable polar coordinates with respect to the non isotropic gauge function defined in \eqref{gauge} below and show how the Grushin operator interacts with these newly introduced polar coordinates.

\begin{align}\label{gauge}
\rho(z,t) = \bct{|z|^4+4t^2}^\frac{1}{4}, \  z\in \rN, \ t\in \ro.
\end{align}

For $0<\phi <\pi$, $0<\theta_i< \pi$, $i= 1,2, \dots, N-2$ and $0<\theta_{N-1}<2\pi$ let

\begin{align}\label{pcrdnts}
\begin{cases}
z_1= \rho \operatorname{sin}^\frac{1}{2}\phi \operatorname{sin}\theta_1\dots \operatorname{sin} \theta_{N-2}\operatorname{sin} \theta_{N-1} \\ z_2 = \rho \operatorname{sin}^\frac{1}{2}\phi \operatorname{sin}\theta_1\dots \operatorname{sin} \theta_{N-2}\operatorname{cos} \theta_{N-1} \\ \vdots \\ z_N = \rho \operatorname{sin} ^\frac{1}{2} \phi \operatorname{cos} \theta_1 \\ t= \frac{\rho^2}{2}\operatorname{cos}\phi.
\end{cases}
\end{align}

Then as computed in \cite{GarShen},  we have
\begin{align}\label{pmeasure}
r &= |z|= \rho \operatorname{sin} ^\frac{1}{2} \phi, \notag\\ 
dz dt & = \frac{1}{2} \rho^{N+1} \bct{\operatorname{sin} \phi}^\frac{N-2}{2} d\rho d\phi d\omega  \\
\mcal{L} &= \Delta_z +|z|^2\frac{\partial^2}{\partial t^2}= \sin \phi \bct{\frac{\partial^2}{\partial\rho^2} + \frac{N+1}{\rho} \frac{\partial}{\partial \rho} + \frac{4}{\rho^2}\mcal{L}_\sigma},\notag
\end{align}
where $d\omega$ denotes the surface measure on $S^{N-1}$ and 
\begin{align}\label{GrBeltrami}
\mcal{L}_\sigma = \frac{\partial^2}{\partial \phi^2} +\frac{N}{2} \frac{\cos\phi}{\sin\phi} \frac{\partial}{\partial \phi} + \frac{1}{\bct{2\sin \phi}^2} \Delta_{S^{N-1}}. 
\end{align}
Here $\sigma = (\phi, \omega)$, $\omega \in S^{N-1}$ and $\Delta_{S^{N-1}}$ denotes the Laplace-Beltrami operator on $S^{N-1}$. Notice that

\begin{equation}\label{iden}
 \operatorname{sin} \phi = \psi
 \end{equation}
where 
\begin{align}\label{gweight}
\psi(z,t) := \frac{r^2}{\rho^2}=\frac{|z|^2}{\bct{|z|^4+4t^2}^\frac{1}{2}}
\end{align}

The following lemma characterizes the spherical Harmonics for the Grushin operator.
\begin{lem}\label{spHar}
Let $k$ be nonnegative integer and $l= k(\mod 2)$, with $0\leq l \leq k$. Suppose that $Y_l$ is a spherical harmonic of degree $l$ for $\Delta_{S^{N-1}}$. Then 
\begin{align*}
g(\phi,\omega) = \sin^{\frac{l}{2}}  \phi C_{\frac{k-l}{2}}^{\frac{l}{2}+\frac{N}{4}}\bct{\cos \phi} Y_l(\omega)
\end{align*}
satisfies $\mcal{L}_\sigma g = -\frac{k(N+k)}{4}g$. Here, $C_{\frac{k-l}{2}}^{\frac{l}{2}+\frac{N}{4}}(\tau)$ is an ultraspherical  (or Gegenbauer) polynomial. We refer to  page 174 in  [E]  for relevant details. 
\end{lem}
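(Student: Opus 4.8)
The plan is to verify the claimed eigenvalue relation by a direct computation that exploits the product structure of $g$ and reduces everything to the classical Gegenbauer differential equation. Since $\Delta_{S^{N-1}}$ acts only on the $\omega$ variable and $Y_l$ is a spherical harmonic of degree $l$, it satisfies $\Delta_{S^{N-1}} Y_l = -l(l+N-2)Y_l$. Writing $g(\phi,\omega)=h(\phi)Y_l(\omega)$ with $h(\phi)=\sin^{l/2}\phi\, C_{\frac{k-l}{2}}^{\frac{l}{2}+\frac{N}{4}}(\cos\phi)$ and inserting this into the expression \eqref{GrBeltrami} for $\mcal{L}_\sigma$, the angular factor decouples and the identity $\mcal{L}_\sigma g=-\frac{k(N+k)}{4}g$ collapses to the one-dimensional statement
\[
h''(\phi)+\frac{N}{2}\frac{\cos\phi}{\sin\phi}h'(\phi)-\frac{l(l+N-2)}{4\sin^2\phi}h(\phi)=-\frac{k(N+k)}{4}h(\phi).
\]

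To treat this ODE I would change variables to $\tau=\cos\phi$, under which $\frac{d}{d\phi}=-\sin\phi\,\frac{d}{d\tau}$ and $\frac{d^2}{d\phi^2}=(1-\tau^2)\frac{d^2}{d\tau^2}-\tau\frac{d}{d\tau}$, while $\sin^2\phi=1-\tau^2$. In these variables the first two terms on the left become $(1-\tau^2)H''-(1+\frac{N}{2})\tau H'$, where $H$ denotes $h$ viewed as a function of $\tau$. I would then substitute the ansatz $H=(1-\tau^2)^{l/4}w(\tau)$, with $w(\tau)=C_{\frac{k-l}{2}}^{\frac{l}{2}+\frac{N}{4}}(\tau)$, and divide out the prefactor $(1-\tau^2)^{l/4}$. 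The first-derivative coefficient then becomes $-(l+1+\frac{N}{2})\tau$, which is exactly the Gegenbauer coefficient $-(2\lambda+1)\tau$ for $\lambda=\frac{l}{2}+\frac{N}{4}$.

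The crux of the argument, and the step I expect to require the most care, is the collection of the zeroth-order terms: differentiating the prefactor $(1-\tau^2)^{l/4}$ produces several contributions carrying a singular factor $\frac{1}{1-\tau^2}$, and these must combine with the centrifugal term $-\frac{l(l+N-2)}{4(1-\tau^2)}H$ so that the singularity cancels. A direct computation shows that the $\tau^2$-coefficient produced by the prefactor equals precisely $\frac{l(l+N-2)}{4}$, so the numerator of the singular part factors as $\frac{l(l+N-2)}{4}(\tau^2-1)$ and the $\frac{1}{1-\tau^2}$ dependence disappears, leaving a constant zeroth-order coefficient equal to $-\frac{l(l+N)}{4}$. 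After this cancellation the reduced equation reads
\[
(1-\tau^2)w''-\left(l+1+\frac{N}{2}\right)\tau w'+\left(\frac{k(N+k)}{4}-\frac{l(l+N)}{4}\right)w=0.
\]

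It then remains to recognize this as the Gegenbauer equation $(1-\tau^2)w''-(2\lambda+1)\tau w'+n(n+2\lambda)w=0$ with $\lambda=\frac{l}{2}+\frac{N}{4}$ and $n=\frac{k-l}{2}$, the latter being a nonnegative integer precisely because of the hypotheses $0\le l\le k$ and $l\equiv k\pmod 2$. The final verification is the arithmetic identity $n(n+2\lambda)=\frac{(k-l)(k+l+N)}{4}=\frac{k(N+k)}{4}-\frac{l(l+N)}{4}$, which holds since $(k-l)(k+l+N)=k(k+N)-l(l+N)$. Hence $w=C_{\frac{k-l}{2}}^{\frac{l}{2}+\frac{N}{4}}$ solves the reduced equation, and unwinding the substitution yields the asserted eigenvalue relation $\mcal{L}_\sigma g=-\frac{k(N+k)}{4}g$.
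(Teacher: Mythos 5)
Your computation is correct. Separating variables with $\Delta_{S^{N-1}}Y_l=-l(l+N-2)Y_l$ reduces the claim to the stated ODE for $h(\phi)$ (note the factor $\frac{1}{(2\sin\phi)^2}$ in $\mcal{L}_\sigma$ correctly produces the $\frac{1}{4\sin^2\phi}$ centrifugal term); the substitutions $\tau=\cos\phi$ and $H=(1-\tau^2)^{l/4}w$ yield exactly the coefficients you record. I verified the delicate cancellation: the singular zeroth-order contributions combine as $\frac{l}{4(1-\tau^2)}\bigl[(l-4)\tau^2+(N+2)\tau^2-(l+N-2)\bigr]=-\frac{l(l+N-2)}{4}$, which together with the remaining $-\frac{l}{2}$ gives the constant $-\frac{l(l+N)}{4}$, and the identity $(k-l)(k+l+N)=k(k+N)-l(l+N)$ then matches the reduced equation with the Gegenbauer equation for $\lambda=\frac{l}{2}+\frac{N}{4}$ and $n=\frac{k-l}{2}$. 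Be aware, though, that the paper offers no proof of this lemma to compare against: it is imported verbatim from \cite{GarShen}, with the Gegenbauer facts deferred to page 174 of the special-functions reference. So your argument is not an alternative route but a self-contained replacement for that citation. Its added value is that it makes explicit where the hypotheses enter: the conditions $0\le l\le k$ and $l\equiv k\pmod 2$ are used exactly once, to ensure $n=\frac{k-l}{2}$ is a nonnegative integer so that the polynomial solution $C_n^\lambda$ exists; the citation route, by contrast, buys brevity at the cost of hiding this structure.
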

As in \cite{GarShen}, we now define,

\begin{align}\label{projSpc}
\mcal{H}_k = \operatorname{span}\left\{ \sin^{\frac{l}{2}}  \phi C_{\frac{k-l}{2}}^{\frac{l}{2}+\frac{N}{4}}\bct{\cos \phi} Y_{l,j}(\omega) \big|  j = 1,2,\dots, d_l, \ 0\leq l\leq k, \ l= k(\mod2)\right\},
\end{align}
where $d_l = \frac{(N+2l-2)\Gamma(N+l-2)}{\Gamma(l+1) \Gamma(N-1)}$ and $\{Y_{l,j}\}_{j=1, ..., d_l}$ denote an  orthonormal basis for the space of spherical harmonics of degree $l$ on $S^{n-1}$. 
By taking $\rho = 1$ in \eqref{pcrdnts}, we can parametrize 

\begin{align}\label{gaugeSp}
\Om = \left\{ (z,t) \in \ro^{N+1} \big| \rho(z,t) = \bct{|z|^4+4t^2}^\frac{1}{4}=1\right\}
\end{align}
and consider the measure given by,
\begin{align}\label{Spmeasure}
d\Om = \sin^\frac{N}{2} d\phi d\omega.
\end{align}
Then as shown in Lemma 2.11 of \cite{GarShen}, we have that
\begin{equation}\label{direct}
L^2(\Om,d\Om) = \bigoplus_{k=0}^\infty \mcal{H}_k.
\end{equation}
 We  also let 
\begin{align}\label{projOp}
P_k: L^2(\Om,d\Om) \rightarrow \mcal{H}_k
\end{align}
 be  the projection operator onto the $(k+1)-$th eigenspace of $\mcal{L}_\sigma$ defined in \eqref{GrBeltrami}. 
 
 \medskip

 \textbf{The Function space $M^{2,2}(\Om)$:}
 In order to introduce  the notion of solution  for the equation   \eqref{e0}, it is natural to consider the following  function space associated with the H\"ormander vector fields
 \begin{align}
 & X_i= \partial_{z_i},\  i=1,...,N. 
 \\
 &Y_j = z_j \partial_t,\ j=1, ...,N.
 \notag
 \end{align}
 We define 
 \[
 M^{2,2}(\Om)=\{ u \in L^{2}(\Om): X_i u,Y_j u, X_i X_j u, X_i Y_j u, Y_iX_ju,Y_i Y_j u \in L^{2}(\Om), i,j \in \{1,\dots, N\}\}.
 \]
 Note that, by Theorem 1 in \cite{Xu} $, M^{2,2}(\Om)$ forms a Hilbert space with respect to the norm
 \begin{align}\label{m22nrm}
 \nrm{u}_{M^{2,2}(\Om)} : = &\nrm{u}_{L^2(\Om)} + \nrm{\md{\nabla_zu}^2}_{L^2(\Om)} + \nrm{|z|\md{\partial_tu}}_{L^2(\Om)}  \\&+\sum_{1\leq i,j \leq N} \nrm{X_iX_ju}_{L^2(\Om)} + \nrm{X_iY_ju}_{L^2(\Om)} +\nrm{Y_iX_j u}_{L^2(\Om)}+\nrm{Y_iY_j u}_{L^2(\Om)}.\notag
 \end{align}

  Also, from the Sobolev embedding theorem as in \cite{Xu}, it follows that
 \begin{equation}\label{hgrInt}
  u, \nabla_H u \in L^{2^*}_{loc} ( \Om),\ 2^*= \frac{2(N+2)}{N}
 \end{equation}
 where
 \[
 \nabla_H u = ( X_1 u,\dots, X_N u, Y_1 u,\dots Y_N u).
 \]

\section{Carleman Estimates}
In this section, we first recall a $L^{p}-L^{q}$ type Carleman estimate derived in \cite{GarShen} using which we prove one of our strong unique continuation result as in Theorem \ref{sucpGr} below. This  corresponds to the situation when the potential $V$ has the growth assumption as in \eqref{conPot}.   We then   derive a particular $L^2-L^2$ Carleman inequality where we obtain a certain asymptotic behaviour of the constant involved in the inequality   in terms of the parameter  $s$, where $s$ corresponds to the exponent of the singular weight in the Carleman estimate ( See Theorem \ref{L2L2CarEst} below).  Using such an estimate, we  argue as in \cite{Pa} and obtain  strong unique continuation property for \eqref{e0} when the potential $V$ satisfies the Hardy type growth assumption as in \eqref{conPot1}. The reader should note that our proof of this  new $L^{2}-L^{2}$  Carleman estimate   relies crucially on the following $L^{2}-L^{2}$ estimate for the projection operator $P_k$   established  in \cite{GarShen} which can be  stated as follows.

\begin{thm}\label{wtdL2est}
There exists a constant $C>0$ depending only on $N$ and $\alpha$, such that for any $h\in L^2(\Om,d\Om)$  
\begin{align}\label{wtdL2ineq}
\int_\Om \md{\sin^{-\alpha} \phi P_k\bct{\sin^{-\alpha}(.)h}(\phi,\omega)}^2 d\Om \leq C \int_\Om |h|^2 d\Om.
\end{align}
Here, $0\leq \alpha <\frac{1}{2}$, if $N$ is even and $0\leq \alpha <\frac{3}{8}$, if $N$ is odd.
\end{thm}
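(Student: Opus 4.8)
The plan is to diagonalize the conjugated projection $\sin^{-\alpha}\phi\,P_k\,\sin^{-\alpha}(\cdot)$ with respect to the joint spherical-harmonic decomposition and thereby reduce \eqref{wtdL2ineq} to a one-dimensional inequality for Gegenbauer polynomials. First I would expand an arbitrary $h\in L^2(\Om,d\Om)$ as $h(\phi,\omega)=\sum_{l,j}h_{l,j}(\phi)Y_{l,j}(\omega)$ in the orthonormal spherical harmonics on $S^{N-1}$. Since $\sin^{-\alpha}\phi$ depends on $\phi$ alone, multiplication by it preserves each spherical-harmonic block, and by Lemma \ref{spHar} and \eqref{projSpc} the projection $P_k$ sends the $(l,j)$-block either to zero (when $l>k$ or $l\not\equiv k\,(2)$) or onto the single function $g_{k,l}(\phi):=\sin^{l/2}\phi\,C^{\frac l2+\frac N4}_{\frac{k-l}2}(\cos\phi)$. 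Hence the full operator is block diagonal, and on an admissible $(l,j)$-block it is the rank-one map $f\mapsto \|g_{k,l}\|_\phi^{-2}\,\langle f,\,\sin^{-\alpha}\phi\,g_{k,l}\rangle_\phi\,\sin^{-\alpha}\phi\,g_{k,l}$, where $\langle\cdot,\cdot\rangle_\phi$ and $\|\cdot\|_\phi$ are taken in $L^2(\sin^{N/2}\phi\,d\phi)$ as dictated by \eqref{Spmeasure}. Its operator norm is exactly $\|\sin^{-\alpha}\phi\,g_{k,l}\|_\phi^2/\|g_{k,l}\|_\phi^2$, so \eqref{wtdL2ineq} is equivalent to the uniform bound
\[
\sup_{0\le l\le k,\ l\equiv k\,(2)}\ \frac{\int_0^\pi \sin^{-2\alpha}\phi\,|g_{k,l}(\phi)|^2\,\sin^{N/2}\phi\,d\phi}{\int_0^\pi |g_{k,l}(\phi)|^2\,\sin^{N/2}\phi\,d\phi}\ \le\ C(N,\alpha).
\]

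Next I would pass to the variable $\tau=\cos\phi$ and set $n=\frac{k-l}2$, $\lambda=\frac l2+\frac N4$. A direct computation turns the two integrals into weighted $L^2$-norms of $C^\lambda_n$: the denominator becomes $\int_{-1}^1(1-\tau^2)^{\lambda-\frac12}|C^\lambda_n(\tau)|^2\,d\tau$, which is precisely the standard Gegenbauer normalization (confirming that the $g_{k,l}$ are mutually orthogonal), while the numerator becomes $\int_{-1}^1(1-\tau^2)^{\lambda-\frac12-\alpha}|C^\lambda_n(\tau)|^2\,d\tau$. Everything thus reduces to showing that the ratio $R_{n,\lambda}$ of these integrals is bounded uniformly in the degree $n\ge0$ and the order $\lambda\ge N/4$ (which runs over $\frac N4+\frac12\mathbb{Z}_{\ge0}$). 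Equivalently, writing $d\mu_{n,\lambda}$ for the probability measure obtained by normalizing $(1-\tau^2)^{\lambda-1/2}|C^\lambda_n(\tau)|^2\,d\tau$, I must bound $\int(1-\tau^2)^{-\alpha}\,d\mu_{n,\lambda}$, i.e. control the mass that $d\mu_{n,\lambda}$ places near the endpoints $\tau=\pm1$, where the extra weight $(1-\tau^2)^{-\alpha}$ is singular.

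The bound on $R_{n,\lambda}$ I would extract from the large-degree asymptotics of $C^\lambda_n$, splitting $[0,\pi]$ into a bulk region $\phi\in(c/n,\pi-c/n)$ and the two endpoint caps. In the bulk the oscillatory Darboux--Szeg\H{o} asymptotics give that the normalized density $|C^\lambda_n(\cos\phi)|^2\sin^{2\lambda}\phi$ is $\asymp\cos^2(\cdots)$, so the bulk part of $R_{n,\lambda}$ is governed by $\int_0^\pi\sin^{-2\alpha}\phi\,d\phi$, which is finite precisely when $\alpha<\tfrac12$; this is where the even-dimensional threshold comes from. In the caps $\phi\sim1/n$ I would invoke the Mehler--Heine/Bessel asymptotics, under which the cap contribution to the ratio scales like $n^{2\alpha-1}\int_0^c z^{1-2\alpha}|J_{\lambda-\frac12}(z)|^2\,dz$ and is therefore subordinate once $\alpha<\tfrac12$. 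The large-$\lambda$ regime is easiest, since the weight $(1-\tau^2)^{\lambda-1/2}$ concentrates near $\tau=0$, away from the singularity, forcing $R_{n,\lambda}\to1$; and only finitely many small $n$ require a direct estimate.

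The main obstacle is to make this asymptotic analysis uniform simultaneously in the degree $n$ and the order $\lambda$, because the constants in the Darboux and Mehler--Heine expansions depend on $\lambda$ through $\Gamma$-factors and the oscillatory/Bessel transition point itself moves with $\lambda$. This uniform endpoint analysis is the delicate step: the half-integer orders $\lambda-\tfrac12=\tfrac N4-\tfrac12$ arising when $N$ is odd make the cap estimates harder to control uniformly, and it is this difficulty—rather than any genuine obstruction at $\alpha=\tfrac12$—that forces the argument of \cite{GarShen} into the smaller range $\alpha<\tfrac38$ in the odd case, whereas the bulk integrability condition $\alpha<\tfrac12$ already suffices when $N$ is even.
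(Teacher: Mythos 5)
First, a point of context: the paper you are reading does not prove Theorem \ref{wtdL2est} at all --- it is quoted verbatim from \cite{GarShen} and used as a black box in Section 3. So your attempt can only be judged against the argument in that reference and on its own internal completeness. Your opening reduction is correct, and it is also the natural common starting point: since $\sin^{-\alpha}\phi$ acts in the variable $\phi$ alone, the conjugated operator is block-diagonal across the spherical-harmonic blocks of $L^2(\Om,d\Om)$; by Lemma \ref{spHar} and \eqref{projSpc} each nonzero block is the rank-one map you write down, whose norm is the ratio of weighted norms of $g_{k,l}$; and after the substitution $\tau=\cos\phi$ the inequality \eqref{wtdL2ineq} becomes exactly the two-parameter bound
\begin{equation*}
\sup_{n,\lambda}\ \frac{\int_{-1}^{1}(1-\tau^2)^{\lambda-\frac12-\alpha}\,\md{C_n^\lambda(\tau)}^2\,d\tau}{\int_{-1}^{1}(1-\tau^2)^{\lambda-\frac12}\,\md{C_n^\lambda(\tau)}^2\,d\tau}\ \le\ C(N,\alpha),
\qquad n\in\mathbb{Z}_{\ge0},\ \ \lambda\in\tfrac N4+\tfrac12\mathbb{Z}_{\ge0}.
\end{equation*}
That algebra (rank-one operator norm, block orthogonality, identification of the denominator with the Gegenbauer normalization) is sound, and your fixed-$\lambda$ heuristics (bulk contribution $\asymp\int_0^\pi\sin^{-2\alpha}\phi\,d\phi$, cap contribution $\asymp n^{2\alpha-1}$) are also correct.

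The genuine gap is that everything after this reduction is a program rather than a proof, and the missing part is the entire analytic content of the theorem. The Darboux--Szeg\H{o} and Mehler--Heine expansions you invoke are asymptotics in $n$ for \emph{fixed} order $\lambda$, with constants and transition scales that degenerate as $\lambda\to\infty$; here $\lambda$ grows with $l\le k$, and the dangerous regime $n\asymp\lambda\to\infty$ (where the oscillation interval and turning points of $C_n^\lambda$ move with $\lambda$) is simply not covered by those expansions --- one would need uniform Olver/Airy-type asymptotics or non-asymptotic pointwise bounds, which you acknowledge as ``the delicate step'' but never carry out. Consequently your sketch establishes no bound for either parity of $N$, not even in the smaller range $\alpha<\tfrac38$ that the theorem claims for $N$ odd. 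Your diagnosis of the even/odd dichotomy is also speculative and partly mistaken on its own terms: for $N$ odd the Bessel orders $\lambda-\tfrac12=\tfrac l2+\tfrac{N-2}4$ are odd multiples of $\tfrac14$, not half-integers (half-integer orders, which are the elementary ones, occur when $N$ is even), and endpoint Bessel asymptotics do not distinguish these values arithmetically. The parity in \cite{GarShen} enters through the method --- when $N$ is even the exponent $2\lambda=l+\tfrac N2$ is an integer, so the one-dimensional blocks match classical zonal spherical-harmonic estimates on integer-dimensional spheres, while the odd case is handled by a lossier route --- and your claim that $\tfrac38$ is a pure artifact, while plausible given that the bulk threshold is $\tfrac12$ for every $\lambda$, is asserted rather than proved. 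In short: a correct and worthwhile reduction, but the inequality itself remains unproven.
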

Next we recall a Carleman inequality derived in \cite{GarShen} ( see Theorem 5.1 in \cite{GarShen}), which will be instrumental in proving Theorem \ref{sucpGr}.  For the rest of the discussion in this paper,  we will denote by  $\mcal{L}$ the  Baouendi-Grushin operator on $\ro^{N+1}$  as in  \eqref{e0} defined by

\begin{align}\label{grushin}
\mcal{L} := \Delta_z +|z|^2\frac{\partial^2}{\partial t^2}.
\end{align}

\begin{thm}\label{LpLqCarEst}
Let $0<\delta<\frac{1}{4}$, $s>100$ and $\operatorname{dist}(s,\mathbb{N})=\frac{1}{2}$ Suppose that $p=\frac{2N}{N-1}$ and $q = \frac{2N}{N+1}$. Then there exists constant $C>0$ depending only on $ \delta$ and $N$, such that for $f\in C_c^\infty\bct{\ro^{N+1}\setminus\{0\}}$, the following inequality holds

\begin{align}\label{eCarEst}
\nrm{\rho^{-s} \bct{\operatorname{sin} \phi}^\delta f}_{L^p\bct{\ro^{N+1}, \frac{dzdt}{\rho^{N+2}}}} \leq C \nrm{\rho^{-s+2} \bct{\operatorname{sin} \phi}^{-\delta} \mcal{L}(f)}_{L^q\bct{\ro^{N+1}, \frac{dzdt}{\rho^{N+2}}}},
\end{align}
if $N\geq 2$ is even, and 

\begin{align}\label{oCarEst}
\nrm{\rho^{-s} \bct{\operatorname{sin} \phi}^{\frac{1}{4p}+\delta} f}_{L^p\bct{\ro^{N+1}, \frac{dzdt}{\rho^{N+2}}}} \leq C \nrm{\rho^{-s+2} \bct{\operatorname{sin} \phi}^{-\frac{1}{4p}-\delta} \mcal{L}(f)}_{L^q\bct{\ro^{N+1}, \frac{dzdt}{\rho^{N+2}}}},
\end{align}
if $N\geq 3$ is odd. 
\end{thm}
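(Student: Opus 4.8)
The plan is to follow the strategy of Jerison, adapted to the Grushin geometry, which reduces the inequality to the spherical-harmonic decomposition together with a one-dimensional resolvent analysis. First I would pass to the polar–logarithmic coordinates of \eqref{pcrdnts}: writing $\rho = e^\tau$ and using \eqref{pmeasure}, the weighted measure becomes, up to the constant $\tfrac12$,
\[
\frac{dz\,dt}{\rho^{N+2}} = \tfrac12 (\sin\phi)^{\frac{N-2}{2}}\,d\tau\,d\phi\,d\omega,
\]
and a direct computation of the radial part shows that
\[
\frac{\rho^2}{\sin\phi}\,\mcal{L} = \partial_\tau^2 + N\partial_\tau + 4\mcal{L}_\sigma .
\]
Since on the right-hand side of \eqref{eCarEst} one has $\rho^{-s+2}(\sin\phi)^{-\delta}\mcal{L}f = \rho^{-s}(\sin\phi)^{-\delta}\bct{\tfrac{\rho^2}{\sin\phi}\mcal{L}}f$, conjugating by the singular weight $\rho^{-s}=e^{-s\tau}$ shifts $\partial_\tau \mapsto \partial_\tau + s$, so that the estimate is equivalent to an $L^q \to L^p$ bound for the inverse of the translation-invariant operator $L_s := (\partial_\tau + s)^2 + N(\partial_\tau + s) + 4\mcal{L}_\sigma$ in the flat $(\tau,\sigma)$ variables, with the factor $(\sin\phi)^{\pm\delta}$ attached.

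Next I would decompose along the eigenspaces of $\mcal{L}_\sigma$ using the direct-sum decomposition \eqref{direct} and the projections $P_k$ of \eqref{projOp}. On $\mcal{H}_k$ Lemma \ref{spHar} gives $4\mcal{L}_\sigma = -k(N+k)$, so on each component $L_s$ collapses to the one-dimensional operator $(\partial_\tau + s)^2 + N(\partial_\tau + s) - k(N+k)$, which factors as
\[
(\partial_\tau + s - k)(\partial_\tau + s + N + k).
\]
The hypothesis $\operatorname{dist}(s,\mathbb{N})=\tfrac12$ enters precisely here: it forces the real parts $s-k$ and $s+N+k$ of the two symbols to stay at distance at least $\tfrac12$ from $0$ for every $k\geq 0$, so that the Fourier multiplier $\bct{(i\xi + s - k)(i\xi + s + N + k)}^{-1}$ is bounded with control that is uniform in $k$ and $s$, yielding an explicit Green's function built from decaying exponentials on the $\tau$-line.

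The technical heart — and the step I expect to be the main obstacle — is reassembling these one-dimensional resolvent bounds into the full $L^p$–$L^q$ inequality. The exponents $p=\frac{2N}{N-1}$ and $q=\frac{2N}{N+1}$ are forced because they are exactly the Stein–Tomas exponent pair for the sphere $S^{N-1}$ that underlies the spherical-harmonic expansion, and the passage from the $L^2$-orthogonal angular decomposition to $L^p$ and $L^q$ requires restriction/spectral-cluster estimates for the operators $P_k$. This is where the degenerate weight $(\sin\phi)^{\pm\delta}$ must be absorbed, and the mechanism is the weighted projection bound of Theorem \ref{wtdL2est}: inserting $(\sin\phi)^{\delta}$ on the left and $(\sin\phi)^{-\delta}$ on the right lets one commute the weight through each $P_k$ at the cost of a uniform constant, so that the degeneracy at $\{\sin\phi = 0\}$ is controlled. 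For $N$ even the admissible range $0\le\alpha<\tfrac12$ in Theorem \ref{wtdL2est} accommodates $\delta<\tfrac14$ directly, whereas for $N$ odd the narrower range $\alpha<\tfrac38$ is what forces the extra shift $\tfrac{1}{4p}$ in the weight exponent appearing in \eqref{oCarEst}. Once the summation over $k$ is carried out against these weighted projection estimates and the uniform one-dimensional multiplier bounds, an interpolation argument produces the claimed $L^p \to L^q$ control; undoing the logarithmic change of variables then returns \eqref{eCarEst} and \eqref{oCarEst}.
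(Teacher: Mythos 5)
You should first be aware that the paper does not prove Theorem \ref{LpLqCarEst} at all: it is recalled verbatim from Theorem 5.1 of \cite{GarShen}, so your attempt has to be measured against the Garofalo--Shen proof (of which the paper's proof of Theorem \ref{L2L2CarEst} is an adaptation). Your reduction does match the skeleton of that proof: passing to the coordinates \eqref{pcrdnts} with $\rho=e^{\tau}$, conjugating by $\rho^{-s}$, decomposing along the eigenspaces $\mcal{H}_k$ of $\mcal{L}_\sigma$, and using $\operatorname{dist}(s,\mathbb{N})=\frac{1}{2}$ to keep the zeros of the resulting one-dimensional symbols away from the real axis is exactly the Jerison-type strategy followed there (modulo the detail that the multiplier actually used, $a_s(\eta,k)$ in \eqref{mellinMultiplier}, carries an extra $+s$ under the square root, so it is not quite your clean factorization $(\partial_\tau+s-k)(\partial_\tau+s+N+k)$).

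The genuine gap is in the step you yourself flag as the technical heart. You propose to pass from the angular decomposition to the $L^q\to L^p$ bound by combining the weighted projection estimate of Theorem \ref{wtdL2est} with ``an interpolation argument''. This cannot work: Theorem \ref{wtdL2est} is an $L^2(\Om)\to L^2(\Om)$ bound, and no interpolation of $L^2\to L^2$ estimates can produce the off-diagonal smoothing pair $q=\frac{2N}{N+1}<2<p=\frac{2N}{N-1}$; interpolation requires an endpoint that already exhibits an $L^q\to L^p$ (or at least $L^q\to L^2$, $L^2\to L^p$) gain. Radial Young's inequality cannot supply the angular gain either: after the one-dimensional kernel bounds, one still needs the projections $P_k$ to map $L^q(\Om)\to L^p(\Om)$ with constants controlled in $k$, and this --- a weighted Sogge-type spectral projection estimate, proved in \cite{GarShen} from pointwise asymptotics of the Gegenbauer polynomials of Lemma \ref{spHar} and oscillatory integral estimates --- is the core of the theorem and is absent from your outline. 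Indeed, in the present paper Theorem \ref{wtdL2est} is invoked precisely where an $L^2$--$L^2$ conclusion suffices, namely for Theorem \ref{L2L2CarEst}, not for Theorem \ref{LpLqCarEst}. Two further points: even at the $L^2$ level the sum over $k$ is not automatic once the weights $(\sin\phi)^{\pm\delta}$ are inserted, since the spaces $\mcal{H}_k$ are then no longer orthogonal --- one needs the dyadic partition $\{\Phi_\beta\}$ of \eqref{parUty}, derivative bounds such as \eqref{bds1Der}, and a convolution argument in the radial variable, so ``uniform boundedness of the multiplier in $k$ and $s$'' alone does not reassemble the pieces; and your attribution of the extra weight $(\sin\phi)^{1/(4p)}$ for odd $N$ to the range $\alpha<\frac{3}{8}$ in Theorem \ref{wtdL2est} is only a heuristic, as in \cite{GarShen} that loss arises in the weighted $L^q\to L^p$ projection estimates themselves when $N$ is odd.
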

Now with  certain modifications (which will be pointed out) in  the proof of Theorem 5.1 in \cite{GarShen}, we show  that the following $L^2-L^2$ Carleman inequality can be derived. As remarked earlier, the main feature of this inequality is the prescribed asymptotic behaviour of the constant and that  is crucially  used in  the proof of  our Theorem \ref{sucp2} which concerns the growth assumption on $V$ as in \eqref{conPot1}. 
\begin{thm}\label{L2L2CarEst}
Let $0<\delta<\frac{1}{4}$, $s>100$ and  $\operatorname{dist}(s,\mathbb{N})=\frac{1}{2}.$  
Then there exists constant $C>0$ depending only on $ \delta$ and $N$, such that for $f\in C_c^\infty\bct{\ro^{N+1}\setminus\{0\}}$, the following inequality holds, 

\begin{align}\label{eL2L2CarIneq}
\nrm{\rho^{-s} \bct{\operatorname{sin} \phi}^\delta f}_{L^2\bct{\ro^{N+1}, \frac{dzdt}{\rho^{N+2}}}} \leq\frac{C \log_2 s}{s} \nrm{\rho^{-s+2} \bct{\operatorname{sin} \phi}^{-\delta} \mcal{L}(f)}_{L^2\bct{\ro^{N+1}, \frac{dzdt}{\rho^{N+2}}}},
\end{align}
if $N\geq 2$ is even, and 

\begin{align}\label{oL2L2CarIneq}
\nrm{\rho^{-s} \bct{\operatorname{sin} \phi}^{\frac{1}{8}+\delta} f}_{L^2\bct{\ro^{N+1}, \frac{dzdt}{\rho^{N+2}}}} \leq \frac{C \log_2 s}{s}\nrm{\rho^{-s+2} \bct{\operatorname{sin} \phi}^{-\frac{1}{8}-\delta} \mcal{L}(f)}_{L^2\bct{\ro^{N+1}, \frac{dzdt}{\rho^{N+2}}}},
\end{align}
if $N\geq 3$ is odd.

\end{thm}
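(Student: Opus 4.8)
The plan is to follow the proof of Theorem 5.1 in \cite{GarShen} (which yields Theorem \ref{LpLqCarEst}), specialized to the exponents $p=q=2$, but now keeping explicit track of the dependence of the constant on $s$. The only genuinely new ingredient is a sharp summation over the spherical modes, which is what produces the factor $\frac{\log_2 s}{s}$ in place of a fixed constant. Throughout, \eqref{eL2L2CarIneq}/\eqref{oL2L2CarIneq} are read as \emph{a priori} estimates for $f\in C_c^\infty(\ro^{N+1}\setminus\{0\})$.

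First I would pass to the polar coordinates \eqref{pcrdnts} and the logarithmic radial variable $\tau=\log\rho$. By \eqref{pmeasure} and \eqref{Spmeasure} the weighted measure splits as $\frac{dz\,dt}{\rho^{N+2}}=\tfrac12(\sin\phi)^{-1}\,d\tau\,d\Om$, and writing $f=\rho^s w$ one computes, using \eqref{pmeasure} and $\mcal{L}=\sin\phi\,\rho^{-2}D$ with $D:=\partial_\tau^2+N\partial_\tau+4\mcal{L}_\sigma$, that
\[
\rho^{-s+2}\mcal{L}f=\sin\phi\,[\rho^{-s}D\rho^s]w,\qquad \rho^{-s}D\rho^s=(\partial_\tau+s)^2+N(\partial_\tau+s)+4\mcal{L}_\sigma.
\]
After converting both sides of the claimed inequality from $L^2(d\mu)$ to $L^2(d\tau\,d\Om)$ (a factor $(\sin\phi)^{-1/2}$ per function) and collecting every power of $\sin\phi$ — the Jacobian $(\sin\phi)^{-1}$, the prefactor $\sin\phi$ of $\mcal{L}$, and the weights $(\sin\phi)^{\pm\delta}$ resp. $(\sin\phi)^{\pm(\frac18+\delta)}$ — the estimate reduces to a \emph{symmetric} weighted resolvent bound
\[
\nrm{(\sin\phi)^{-\alpha}w}_{L^2(d\tau\,d\Om)}\le\frac{C\log_2 s}{s}\,\nrm{(\sin\phi)^{\alpha}\,[\rho^{-s}D\rho^s]\,w}_{L^2(d\tau\,d\Om)},
\]
where $\alpha=\tfrac12-\delta$ when $N$ is even and $\alpha=\tfrac38-\delta$ when $N$ is odd. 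The extra exponent $\tfrac18$ in the odd case is dictated precisely by the requirement $\alpha<\tfrac38$ appearing below.

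Next I would diagonalize. Taking the Fourier transform in $\tau$ (variable $\xi$) and expanding in the eigenspaces $\mcal{H}_k$ of $\mcal{L}_\sigma$ via \eqref{direct}, Lemma \ref{spHar} gives $4\mcal{L}_\sigma=-k(N+k)$ on $\mcal{H}_k$, so that on the mode $k$ at frequency $\xi$ the operator $\rho^{-s}D\rho^s$ acts by multiplication by
\[
m_k(\xi)=-\xi^2+s(s+N)-k(k+N)+i(2s+N)\xi.
\]
The hypothesis $\operatorname{dist}(s,\mathbb{N})=\tfrac12$ guarantees $m_k(\xi)\neq0$ for all $k,\xi$: at $\xi=0$ one has $m_k(0)=(s-k)(s+k+N)$ with $|s-k|\ge\tfrac12$, while for $\xi\neq0$ the imaginary part $(2s+N)\xi$ is nonzero. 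By the spectral decomposition \eqref{direct} of $\mcal{L}_\sigma$ and Plancherel in $\tau$, the resolvent bound reduces to controlling, uniformly in $\xi$, the operator $\sum_k m_k(\xi)^{-1}(\sin\phi)^{-\alpha}P_k(\sin\phi)^{-\alpha}$ on $L^2(d\Om)$. By Theorem \ref{wtdL2est} we have $\nrm{(\sin\phi)^{-\alpha}P_k(\sin\phi)^{-\alpha}}_{L^2(d\Om)\to L^2(d\Om)}\le C$ \emph{uniformly in} $k$, valid exactly because $\alpha<\tfrac12$ ($N$ even) resp. $\alpha<\tfrac38$ ($N$ odd) — this is where the constraints $0<\delta<\tfrac14$ and the two different weights enter. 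Hence the triangle inequality bounds the Carleman constant by $C\sup_\xi\sum_k|m_k(\xi)|^{-1}$.

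The heart of the argument — and the step I expect to be the main obstacle — is then to show
\[
\sup_{\xi\in\ro}\ \sum_{k}\frac{1}{|m_k(\xi)|}\le\frac{C\log_2 s}{s}.
\]
The extremal frequency is $\xi=0$, where $|m_k(0)|=|s-k|\,(s+k+N)$; since $s+k+N\gtrsim s$ for every $k\ge0$ and $|s-k|\ge\tfrac12$, a dyadic decomposition of the modes according to the size of $|s-k|$ gives $\sum_k|m_k(0)|^{-1}\lesssim\frac1s\sum_{0\le k\lesssim s}\frac{1}{|s-k|}+\frac1s\lesssim\frac{\log_2 s}{s}$, the harmonic sum over the $\sim s$ modes below the resonance producing the logarithmic factor (the $\sim\log_2 s$ dyadic scales give the base-$2$ logarithm). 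For $\xi\neq0$ one uses $|m_k(\xi)|\ge(2s+N)|\xi|$ to absorb the $O(|\xi|)$ near-resonant modes (contributing $O(1/s)$) and the genuine gap $|\operatorname{Re}m_k(\xi)|$ for the remaining modes, verifying that $\xi=0$ dominates. Tracking the $\sin\phi$ bookkeeping in the reduction and establishing this last supremum with the correct $\frac{\log_2 s}{s}$ rate are the delicate points; everything else is a specialization of \cite{GarShen} to $p=q=2$.
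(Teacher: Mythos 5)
Your proposal is correct, and it reaches \eqref{eL2L2CarIneq}/\eqref{oL2L2CarIneq} by a genuinely different mechanism than the paper, even though the two arguments share the same skeleton: polar coordinates, conjugation by $\rho^{s}$, decomposition along the eigenspaces $\mcal{H}_k$ of $\mcal{L}_\sigma$, and, decisively, the uniform-in-$k$ weighted projection bound of Theorem \ref{wtdL2est} with $\alpha=\tfrac12-\delta$ ($N$ even) and $\alpha=\tfrac38-\delta$ ($N$ odd), which is also exactly how the paper's exponents arise. The paper follows Theorem 5.1 of \cite{GarShen} nearly verbatim: it reduces to the bound \eqref{potEst} for the solution operator $R_s$ with multiplier $a_s(\eta,k)$ (your $m_k(\xi)$, up to a harmless shift in the imaginary part coming from a different conjugation convention), and then controls $R_s$ in physical space, via a dyadic partition of unity $\{\Phi_\beta\}_{\beta=0}^m$, $m\approx\log_2 s$, in the distance to the pole of $a_s$, kernel estimates $\nrm{F_s^\beta(x,y,\cdot,\cdot)}\le f_s^\beta(|x-y|)\nrm{g(x,\cdot,\cdot)}$ obtained by integration by parts in $\eta$, and Minkowski plus Young's convolution inequality; each piece $R_s^\beta$ has norm $C/s$, and the factor $\log_2 s$ is simply the number of dyadic pieces. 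You instead exploit the pure $L^2$ structure: Plancherel in $\tau=\log\rho$ reduces everything to a fixed-frequency operator-norm bound, the triangle inequality over $k$ (legitimate precisely because Theorem \ref{wtdL2est} is uniform in $k$) converts this to the scalar inequality $\sup_\xi\sum_k|m_k(\xi)|^{-1}\le C\log_2 s/s$, and the logarithm emerges from the harmonic sum $\frac1s\sum_{k\lesssim s}|s-k|^{-1}$ at the worst frequency $\xi\approx0$. Your route dispenses with the partition of unity, the kernel bounds and Young's inequality, and makes transparent where the logarithmic loss comes from; the paper's route is economical given the reference and retains the convolution structure that is what generalizes to the $L^p$--$L^q$ setting of Theorem \ref{LpLqCarEst}, where Plancherel is unavailable.

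The step you flag as the main obstacle is indeed the only one requiring real work, but your sketch is the right one and the estimate is true; just record the case analysis. For $\xi^2\le s/4$ one has $|\operatorname{Re}m_k(\xi)|\ge\tfrac12|s-k|(s+k+N)$ for every $k$ (this is where $\operatorname{dist}(s,\mathbb{N})=\tfrac12$ enters), so there are no near-resonant modes and the $\xi=0$ computation carries over verbatim. For $s/4\le\xi^2\le s(s+N)/2$ the modes with $|\operatorname{Re}m_k(\xi)|\le(2s+N)|\xi|$ number $O(|\xi|+1)$ and contribute $O(1/s)$ through the imaginary part, while the remaining modes sum to at most $\frac{C}{s}\log\bigl(s/|\xi|\bigr)\lesssim\frac{\log_2 s}{s}$. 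For $\xi^2\ge s(s+N)/2$ the imaginary part is $\gtrsim s^2$ and a dyadic count of the modes gives $O(1/s)$. Hence the supremum over $\xi$ is attained, up to constants, at $\xi=0$, as you claim.
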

\begin{rem}
We note that for the $L^{2}-L^{2}$ Carleman estimate corresponding to  the standard Laplacian, i.e. $\mcal{L}=\Delta$, it can be shown that  the asymptotic behavior of the constant  is infact   $\frac{C}{s}$  as $s \to \infty$ for some universal $C$( see for instance \cite{ABV}). This is clearly better than the  one we have  in Theorem \ref{L2L2CarEst}  above.   However for our application  to unique continuation as in Theorem \ref{sucp2}, it turns out that the asymptotic behaviour of the constant that we obtain in Theorem \ref{L2L2CarEst} above suffices. 
\end{rem}

\begin{proof}
We will point out the changes only for the case  $N$ is even. In the case of $N$ odd, the proof will follow similarly. In view of the discussion   in the proof of Theorem 5.1 in  \cite{GarShen}(  more precisely, as on page 157-158 in \cite{GarShen}), it suffices to show that the following inequality holds
\begin{align}\label{potEst}
\nrm{R_s(g)}_{L^2\left(\ro\times\Om, \bct{\sin \phi}^{-1+2\delta} dyd\Om\right)} \leq  \frac{C \text{log}_2 s}{s} \nrm{g}_{L^2\bct{\ro\times \Om, \bct{\sin \phi}^{-1-2\delta}dyd\Om}}, \ \forall g \in C_c^\infty(\ro\times \Om).
\end{align}
Here,
\begin{align}\label{invMellin}
R_s(g)(y,\phi,\omega) = \int_{\ro} \int_{\ro} e^{i(y-x)\eta} \sum_{k=0}^\infty \frac{Q_k(g)(x,\phi,\omega)}{a_s(\eta,k)} d\eta dx, 
\end{align}
where  the operator $Q_k$ is defined by
\begin{align}\label{invProj}
Q_k(g)(x,\phi,\omega) = P_k\bct{\frac{g(x,\cdot,\cdot)}{\sin(\cdot)}}(\phi,\omega)
\end{align}

\medskip

and
\medskip

\begin{align}\label{mellinMultiplier}
a_s(\eta,k) &= -\bct{\eta -i \bct{\bct{s+\frac{N+1}{2}}-\sqrt{k(N+k)+s+\frac{(N+1)^2}{4}}}}\notag\\ &\cdot \bct{\eta -i \bct{\bct{s+\frac{N+1}{2}}+\sqrt{k(N+k)+s+\frac{(N+1)^2}{4}}}}.
\end{align}

Now, fix $s>100$ such that $dist(s,\mathbb{N})=\frac{1}{2}$. Let $m$ satisfies $2^m\leq \frac{s}{10} <2^{m+1}$. Similar to \cite{GarShen}, we choose a partition of unity $\{\Phi_\beta\}_{\beta=0}^m$ for $\ro_+$ such that
\begin{align}\label{parUty}
\begin{cases}
\sum_{\beta} \Phi_\beta(r) =1 , \text{ for all  }r>0 \\ \operatorname{supp}\Phi_\beta \subset  \{r: 2^{\beta-2} \leq r \leq 2^\beta\}, \ \beta =1,2,\dots,m-1 \\ \operatorname{supp} \Phi_0 \subset \{r:0<r\leq 1\} \\ \operatorname{supp} \Phi_m  \subset \{r: r\geq \frac{s}{40}\}
\end{cases}
\end{align}
and 
\begin{align}\label{parUty1}
\md{\frac{d^l}{dr^l} \Phi_\beta(r)} \leq \frac{C_l}{2^{\beta l}}, \ l=0,1,2,\dots
\end{align}
For $0\leq \beta \leq m$, we define 
\begin{align}\label{parUtyMelMul}
b_s^\beta(\eta, k) = \frac{1}{a_s(\eta,k)} \Phi_\beta \bct{\md{\eta -i \bct{\bct{s+\frac{N+1}{2}}-\sqrt{k(N+k)+s+\frac{(N+1)^2}{4}}}}}
\end{align}
and 
\begin{align}\label{parUtyInvMel}
R_s^\beta(g)(y,\phi,\omega) = \int_{\ro} \int_{\ro} e^{i(y-x)\eta} \sum_{k=0}^\infty Q_k(g)(x,\phi,\omega)b_s^\beta(\eta,k) d\eta dx.
\end{align}
%Note that, it is enough to establish inequality \eqref{invMellin} with $R_s^\beta$ in place of $R_s$, for every $\beta$. 

We further define
\begin{align}\label{intrmdtFun}
F_s^\beta(x,y,\phi,\omega) = \int_{\ro} e^{i(y-x)\eta} \sum_{k=0}^\infty Q_k(g)(x,\phi,\omega)b_s^\beta(\eta,k) d\eta.
\end{align}
Now, suppose there exists $f_s^\beta \in L^1(\ro)$ such that 
\begin{align}\label{intrmdtIneq}
\nrm{F_s^\beta(x,y,\cdot,\cdot)}_{L^2\left(\Om,\bct{\sin \phi}^{-1+2\delta}d\Om\right)} \leq f_s^\beta(|x-y|)\nrm{g(x,\cdot,\cdot)}_{L^2\bct{\Om, \bct{\sin\phi}^{-1-2\delta}d\Om}},
\end{align}
then we can use Minkowski's integral inequality to estimate  $\nrm{R^\beta_s(g)}_{L^2\bct{\ro\times\Om, \bct{\sin\phi}^{-1+2\delta}dyd\Om}}$ in the following manner
\begin{align}\label{anal}
&\nrm{R^\beta_s(g)}_{L^2\bct{\ro\times\Om, \bct{\sin\phi}^{-1+2\delta}dyd\Om}}\\& =  \bct{\int_{\ro}\bct{\bct{\int_\Om \bct{\int_{\ro}F_s^\beta(x,y,\phi,\omega)\bct{\sin\phi}^\frac{-1+2\delta}{2}dx }^2 d\Om}^\frac{1}{2}}^2dy}^\frac{1}{2}\notag \\ &\leq \bct{\int_{\ro} \bct{\int_{\ro} \nrm{F_s^\beta(x,y,\cdot,\cdot)}_{L^2\bct{\Om,\bct{\sin\phi}^{-1+2\delta}d\Om}}dx}^2dy}^\frac{1}{2}\notag \\ &\leq  \bct{\int_\ro \bct{\int_{\ro}f_s^\beta(|x-y|) \nrm{g(x,\cdot,\cdot)}_{L^2\bct{\Om,\bct{\sin\phi}^{-1-2\delta}d\Om}}dx}^2dy}^\frac{1}{2}\notag \\&\leq \nrm{f_s^\beta}_{L^1(\ro)} \nrm{g}_{L^2\bct{\ro\times\Om,\bct{\sin\phi}^{-1-2\delta}dyd\Om}}\notag
\end{align}
where to get the last two inequality we have used \eqref{intrmdtIneq} and the  Young's inequality for convolution respectively.  Now since 
\[
R_s= \sum_{\beta=0}^m R_s^\beta,
\]

 therefore the  preceding inequality clearly shows that to establish \eqref{potEst} we only need to find appropriate $f_s^\beta$ in \eqref{intrmdtIneq}. 
\par First consider $0\leq \beta \leq m-1$. In this case, if $b_s^\beta(\eta,k)\neq 0$, then by \eqref{parUty},
\begin{align*}
\delta 2^{\beta-2} \leq \md{\eta -i \bct{\bct{s+\frac{N+1}{2}}-\sqrt{k(N+k)+s+\frac{(N+1)^2}{4}}}} \leq 2^\beta.
\end{align*}
Hence, $|\eta| \leq 2^\beta$ and $|s-k|\leq 2^{\beta+1}$. Therefore, there are at most $2^{\beta+2}$ nonzero terms in the sum over $k$ in \eqref{parUtyMelMul} and one can compare the values of these $k'$s to $s$. So as in (5.19) in \cite{GarShen}, using \eqref{parUty1}, \eqref{mellinMultiplier} and \eqref{parUtyMelMul} we conclude that 
\begin{align}\label{bds1Der}
\md{\frac{\partial^j}{\partial \eta^j} b_s^\beta(\eta,k)} \leq \frac{C_j2^{-\beta}}{s2^{j\beta}}.
\end{align}
Invoking \eqref{wtdL2ineq} with $h= (\text{sin}\ \phi)^{\alpha-1}  g$, we get
\begin{align*}
\nrm{Q_k(g)}_{L^2\bct{\Om, \bct{\sin \phi}^{-2\alpha}d\Om}} \leq C \nrm{g}_{L^2\bct{\Om, \bct{\sin \phi}^{2\alpha-2}d\Om}},
\end{align*}
for $0<\alpha<\frac{1}{2}$. Choosing, $\delta = \frac{1}{2}-\alpha$ in the last inequality we get
\begin{align}\label{invProjEst}
\nrm{Q_k(g)}_{L^2\bct{\Om, \bct{\sin \phi}^{-1+2\delta}d\Om}} \leq C \nrm{g}_{L^2\bct{\Om, \bct{\sin \phi}^{-1-2\delta}d\Om}}
\end{align}
Now performing integration by parts and using \eqref{bds1Der}, \eqref{invProjEst}  we conclude that
\begin{align*}
&\nrm{F^\beta_s(x,y,\cdot,\cdot)}_{L^2\bct{\Om, \bct{\sin \phi}^{-1+2\delta}d\Om}} \\&\leq \frac{C}{|y-x|^j} \sum_{k=0}^\infty \int_{\ro} \md{\left(\frac{\partial}{\partial\eta}\right)^jb_s^\beta(\eta,k)} d\eta \nrm{Q_k(g)}_{L^2\bct{\Om, \bct{\sin \phi}^{-1+2\delta}d\Om}} \\&\leq \frac{C2^\beta}{s(2^\beta|y-x|)^j} \nrm{g}_{L^2\bct{\Om, \bct{\sin \phi}^{-1-2\delta}d\Om}}.
\end{align*}
In the above estimate, we   crucially used the fact that the support of integral lies in $\{|\eta| \leq \beta\}$ and the fact that atmost $2^{\beta+2}$ terms  survive in the above summation over $k$.

\medskip

If we  now choose $j=10$ and $j=0$, then  we obtain
\begin{align}
\nrm{F^\beta_s(x,y,\cdot,\cdot)}_{L^2\bct{\Om, \bct{\sin \phi}^{-1+2\delta}d\Om}} \leq \frac{C}{s\bct{1+2^\beta|y-x|}^{10}} 2^\beta \nrm{g}_{L^2\bct{\Om, \bct{\sin \phi}^{-1-2\delta}d\Om}}.\notag
\end{align}
Thus we are in the form of \eqref{intrmdtIneq} with $f_s^\beta(r) = \frac{C}{s\bct{1+2^\beta|r|}^{10}} 2^\beta$. Clearly, $\nrm{f_s^\beta}_{L^1(\ro)} \leq \frac{C}{s}$. Then from the estimates as in \eqref{anal}, this   implies for each $\beta=0, ... m-1$
\begin{equation}
\nrm{R_s^\beta(g)}_{L^2\left(\ro\times\Om, \bct{\sin \phi}^{-1+2\delta} dyd\Om\right)}\leq \frac{C }{s} \nrm{g}_{L^2\bct{\ro\times \Om, \bct{\sin \phi}^{-1-2\delta}dyd\Om}}.
\end{equation}

Therefore by  summing over $\beta=0$ to $\beta=m-1$ and by  using the fact that
\[
m \leq \text{log}_{2} s
\]

 we obtain
\begin{align}\label{finPotEst1}
& \sum_{\beta=0}^{m-1} \nrm{R_s^\beta(g)}_{L^2\left(\ro\times\Om, \bct{\sin \phi}^{-1+2\delta} dyd\Om\right)}\leq  \frac{C \text{log}_2 s}{s} \nrm{g}_{L^2\bct{\ro\times \Om, \bct{\sin \phi}^{-1-2\delta}dyd\Om}}.
\end{align}
Finally we  consider the case of $\beta =m$. For this, we observe that on the support of $b_s^m(\eta,k)$ we have 
\begin{equation}\label{eq}
\md{a_s(\eta,k)} \thicksim \bct{|\eta|+s+k}^2 
\end{equation}
and also 
\begin{equation}\label{endCaseBds1Der}
\md{\bct{\frac{\partial}{\partial\eta}}^j b_s^N(\eta,k)} \leq \frac{C_j}{\bct{|\eta|^2+k+s}^{j+2}}.
\end{equation}
Similarly as in the previous case, by integration by parts, using  and \eqref{eq} and \eqref{endCaseBds1Der} and by choosing $j=0$ and $j= 2$  we arrive at

\begin{align}\label{endCaseFinPotEst}
\nrm{F^m_s(x,y,\cdot,\cdot)}_{L^2\bct{\Om, \bct{\sin \phi}^{-1-2\ep}d\Om}} \leq \sum_{k=0}^\infty\frac{C}{(k+s)\bct{1+(k+s)^2|y-x|^2}} \nrm{g}_{L^2\bct{\Om, \bct{\sin \phi}^{-1-2\ep}d\Om}}.
\end{align}
Thus we are again  in the situation as in  \eqref{intrmdtIneq} with $f_s^\beta(r) =\sum_{k=0}^\infty\frac{C}{(k+s)\bct{1+(k+s)^2|r|^2}}$. Clearly,
\begin{equation}
\nrm{f_s^\beta}_{L^1(\ro)} \leq C\sum_{k=0}^\infty \frac{1}{(k+s)^2} \leq \frac{C}{s}. \notag
\end{equation}
Therefore we conclude that
\begin{equation}\label{finPotEst2}
\nrm{R_s^m(g)}_{L^2\left(\ro\times\Om, \bct{\sin \phi}^{-1+2\delta} dyd\Om\right)} \leq  \frac{C}{s} \nrm{g}_{L^2\bct{\ro\times \Om, \bct{\sin \phi}^{-1-2\delta}dyd\Om}}
\end{equation}
Finally, combining \eqref{finPotEst1} and \eqref{finPotEst2} we obtain \eqref{potEst}. This completes the proof.
\end{proof}

\section{Strong Unique Continuation}
In this  section, we establish the strong unique continuation property  for \eqref{e0} with the growth assumptions on $V$ as in \eqref{growth} or \eqref{gr2} using the Carleman estimates  as stated in  Theorem \ref{LpLqCarEst} and Theorem \ref{L2L2CarEst}.  For $r>0$ and $t_0\in \ro$, we define 

\begin{align}\label{gaugeball}
B_r\bct{(0,t_0)} := \{(z,t)\in \ro^{N+1}: \bct{|z|^4+4|t-t_0|^2}^\frac{1}{4} < r\}, \  \ B_r:= B_r((0,0)).  
\end{align}
%\subsection{Some Relevant Function Spaces}
%Following the notations in \cite{GarShen}, we let $S^2(B_r(0,t_0))$ be  the closure of $C_c^\infty(B_r(0,t_0))$ under the norm
%\begin{align}\label{grNrm}
%\nrm{u}_{S^2(B_r(0,t_0))} := \bct{\int_{B_r(0,t_0)} \bct{\md{\nabla^2_z u}+\md{|z|^2\partial_t^2u}^2 + \md{\nabla_zu}^2 +\md{|z|\partial_tu}^2+ |u|^2}dz dt}^\frac{1}{2}
%\end{align}
%Notice that by Theorem 5.2 of \cite{KPS}, $S^2\left(B_r(0,t_0)\right)$ is nothing but the closure of $C_c^\infty\left(B_r(0,t_0)\right)$ in $M^{2,2}\left(B_r(0,t_0)\right)$. Hence for any $u \in S^2\left(B_r(0,t_0)\right)$
%\begin{align}\label{hgrInt}
%u, \nabla_zu, |z|\partial_tu \in L^{2^*}_{\operatorname{loc}}(B_r\bct{(0,t_0)}),
%\end{align}
%where, $2^*:= \frac{2(N+2)}{N}$.

%****Our next aim is to recall the definition of some natural function spaces associated to the Grushin operator $\mcal{L} := \nabla_z + |z|^2\partial^2_t$. For that, let us observe that the most natural H\"omander vector fields related to $\mcal{L}$ are the following 
%\begin{align*}
%X_i &=\partial_{z_i} , \ \ i=1,\dots,N, \\
%X_{N+i}&= z_i \partial_t, \ \ i=1,\dots,N. 
%\end{align*}
%Now following the notations introduced in \cite{KPS}, for any $k,p\geq 1$ and  $\Om \subset \ro^{N+1}$ open 
%\begin{align}\label{sobSpHorVf}
%M^{k,p}(\Om):= 
%\end{align}

 %By Sobolev embedding,  it follows  $\nabla_z u,  |z| \partial_t u, u \in L^{2^*}(B_r(0,t_0))$ with $2^*=\frac{2(N+1)}{N-1}$. 
 
 \begin{defi}\label{iv}
  We recall that $u$ vanishes to infinite order at the point $(0,t_0)$ in the $L^p$ mean, if 
 \begin{align}\label{vanLp}
 \int_{B_r(0,t_0)} |u|^p dz dt = \mcal{O}(r^l), \text{ as } r \rightarrow 0 \text{ for all } l>0.
 \end{align}
 \end{defi}
 
 We note that in \cite{G} as well as in \cite{GLa}, the authors insisted  in their definition of vanishing to infinite order at $(0,0)$ that  the function $u$ must satisfy  the following weaker assumption
 \begin{equation}\label{wk}
 \int_{B_r} u^2 \psi dzdt = O(r^{l}),\ \text{for all $l>0$ as $r \to 0^+$}
 \end{equation}
 
 It however turns out that  for functions in  $M^{2,2}(B_1)$,   vanishing to infinite order as in \eqref{vanLp} ( for $p=2$)  is in fact equivalent to vanishing to infinite order in the sense of \eqref{wk}. This is the content of the next lemma. 
 
 \begin{lem}\label{eq}
 Let $u \in M^{2,2}( B_1)$. Then  $u$ vanishes to infinite order in the $L^{2}$ mean at $(0,0)$ ( in the sense of Definition \ref{iv}) if and only if $u$ vanishes to infinite order in the sense of \eqref{wk}.
 \end{lem}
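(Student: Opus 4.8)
The two notions of vanishing differ only through the weight $\psi$, which by \eqref{gweight} satisfies $0\le\psi\le 1$ and degenerates precisely on the singular set $\{z=0\}$. Consequently one implication is immediate: since $\psi\le 1$ pointwise, $\int_{B_r}u^2\psi\,dz\,dt\le\int_{B_r}u^2\,dz\,dt$, so vanishing to infinite order in the $L^2$ mean (Definition \ref{iv} with $p=2$) trivially forces vanishing in the weighted sense \eqref{wk}. The whole content of the lemma is therefore the reverse implication, where one must recover the full $L^2$ mass of $u$ near the origin from the weighted quantity $\int_{B_r}u^2\psi$, compensating for the degeneration of $\psi$ along $\{z=0\}$.

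The plan is to interpolate the weighted bound against the higher integrability furnished by the subelliptic Sobolev embedding. Two ingredients enter. First, by \eqref{hgrInt} we have $u\in L^{2^*}_{loc}$ with $2^*=\frac{2(N+2)}{N}$, so that $M:=\int_{B_{1/2}}|u|^{2^*}\,dz\,dt<\infty$ and $\int_{B_r}|u|^{2^*}\,dz\,dt\le M$ for all $r\le\tfrac12$. Second, negative powers of $\psi$ are integrable up to a sharp threshold: using the polar decomposition \eqref{pmeasure} together with the identity $\sin\phi=\psi$ from \eqref{iden}, one computes
\[
\int_{B_r}\psi^{-\gamma}\,dz\,dt = c_{N,\gamma}\,r^{N+2},\qquad 0\le\gamma<\tfrac{N}{2},
\]
the angular integral $\int_0^\pi(\sin\phi)^{\frac{N-2}{2}-\gamma}\,d\phi$ being finite exactly when $\gamma<N/2$, while the $\rho$-integral produces the homogeneous scaling $r^{N+2}$. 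The gap between the Sobolev gain $2^*>2$ and this integrability threshold is what will make the argument close.

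With these in hand I would apply H\"older's inequality with three exponents. Choosing $\kappa\in\left(\frac{N}{2(N+1)},\frac{N}{N+2}\right)$ (a nonempty interval for every $N\ge 1$, since $N+2<2N+2$) and setting $\theta=1-\frac{N+2}{N}\kappa$ and $\lambda=\frac{2\kappa}{N}$, one checks that $\theta,\kappa,\lambda>0$, $\theta+\kappa+\lambda=1$ and $2\theta+2^*\kappa=2$. Factoring $u^2=(u^2\psi)^\theta\,(|u|^{2^*})^\kappa\,\psi^{-\theta}$ and applying H\"older then yields
\[
\int_{B_r}u^2\,dz\,dt\le\left(\int_{B_r}u^2\psi\,dz\,dt\right)^{\!\theta}\left(\int_{B_r}|u|^{2^*}\,dz\,dt\right)^{\!\kappa}\left(\int_{B_r}\psi^{-\theta/\lambda}\,dz\,dt\right)^{\!\lambda}.
\]
The constraint $\kappa>\frac{N}{2(N+1)}$ guarantees $\theta/\lambda<N/2$, so the last factor is finite and equals $(c\,r^{N+2})^\lambda$ by the computation above, while the middle factor is bounded by $M^\kappa$. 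Hence
\[
\int_{B_r}u^2\,dz\,dt\le C\left(\int_{B_r}u^2\psi\,dz\,dt\right)^{\!\theta}r^{(N+2)\lambda}.
\]

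Since the weighted integral is $\mathcal{O}(r^l)$ for every $l>0$ by hypothesis and $\theta>0$, the right-hand side is $\mathcal{O}(r^l)$ for every $l>0$, which is exactly vanishing to infinite order in the $L^2$ mean in the sense of Definition \ref{iv}. The one delicate point is the exponent bookkeeping in the displays above: one must verify that the admissible range for $\kappa$ is nonempty, i.e.\ that the Sobolev exponent $2^*$ is large enough relative to the integrability threshold $N/2$ of $\psi^{-\gamma}$. This is precisely where the subelliptic Sobolev embedding \eqref{hgrInt} enters, and it is the only genuine obstacle in an otherwise routine interpolation; once it is secured, the power $r^{(N+2)\lambda}$ from the degenerate weight is harmless and the conclusion follows directly.
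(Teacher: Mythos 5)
Your proof is correct, and the exponent bookkeeping checks out: with $\theta=1-\frac{N+2}{N}\kappa$ and $\lambda=\frac{2\kappa}{N}$ one indeed has $\theta+\kappa+\lambda=1$, $2\theta+2^*\kappa=2$, and $\theta/\lambda=\frac{N-(N+2)\kappa}{2\kappa}<\frac{N}{2}$ exactly when $\kappa>\frac{N}{2(N+1)}$, so the three-exponent H\"older application is legitimate, the admissible interval for $\kappa$ is nonempty, and the polar-coordinate identity $\int_{B_r}\psi^{-\gamma}\,dz\,dt=c_{N,\gamma}r^{N+2}$ for $\gamma<\frac{N}{2}$ follows from \eqref{pmeasure} and \eqref{iden}. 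Your ingredients are the same as the paper's (the trivial direction via $\psi\le 1$, the subelliptic Sobolev bound \eqref{hgrInt}, and integrability of negative powers of $\psi$), but the route is structurally different: you bound $\int_{B_r}u^2$ in a single step via the factorization $u^2=(u^2\psi)^\theta\,(|u|^{2^*})^\kappa\,\psi^{-\theta}$, whereas the paper runs a three-step chain --- weighted interpolation between $L^2(\psi\,dz\,dt)$ and $L^{2^*}$ to get weighted $L^q$ vanishing for $q\in(2,2^*)$, then H\"older against $\psi^{-1/(q-1)}$ (integrable because $\frac{1}{q-1}<1$) to get \emph{unweighted} $L^1$ vanishing, then unweighted $L^1$--$L^{2^*}$ interpolation to climb back to $L^2$. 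Your one-shot version is shorter, gives an explicit rate $r^{l\theta+(N+2)\lambda}$, and exploits the sharp threshold $\gamma<\frac{N}{2}$ (so it would even cover $N=1$, where the paper's crude bound $\frac{1}{q-1}<1$ is not sufficient); the paper's stepwise argument uses only standard two-exponent inequalities, and the same pattern (interpolate, then H\"older against an integrable negative power of $\sin\phi$) is the one recycled later in the proof of Theorem \ref{sucp2}, which explains its presentation there.
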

 \begin{proof}
 First we note that if  $u$ vanishes to infinite order in the $L^{2}$ mean,  since $\psi \leq 1$, 
 we have 
 \[
 \int_{B_r} u^2 \psi dzdt \leq \int_{B_r} u^2  dzdt
 \]
 and hence it follows that $u$  vanishes to infinite order in the sense of \eqref{wk}. Now let us  look at the  converse implication. First we note that from \eqref{hgrInt} we have 
 \[
 u \in L^{2^*}_{loc} (B_1),\  \text{where}\ 2^*=\frac{2(N+2)} {N}
 \]
 Now if $u$ vanishes to infinite order in the sense of \eqref{wk}, it follows  from the interpolation inequality 
 \[
\nrm{u}_{L^q(B_r,\ \psi dz dt)} \leq \ \nrm{u}_{L^2(B_r,\ \psi dzdt)}^\theta \nrm{u}_{L^{2^*}(B_r, \ \psi dzdt)}^{1-\theta},\  \text{ where $\frac{\theta}{2} +\frac{1-\theta}{2^*}=\frac{1}{q}$}
\] 
that  for all $q < 2^{*}$,  we have 
\begin{equation}\label{v1}
\int_{B_r} u^{q} \psi dz dt= O(r^{l}),\ \text{ as $r \to 0^+$}
\end{equation}
for all $l>0$. 
Now we fix some $q \in (2, 2^*)$. Then  from the H\"older inequality it follows 
\begin{equation}\label{v2}
\int_{B_r} |u| dzdt \leq  (\int_{B_r} |u|^q \psi dzdt)^{1/q}  (\int_{B_r} \psi^{-\frac{1}{q-1}} dzdt)^{\frac{q-1}{q}}
\end{equation}
Now since $q>2$, therefore we have that $\frac{1}{q-1} < 1$ and hence it follows from the polar decomposition as in \eqref{pmeasure}  that 
\[
\int_{B_r} \psi^{-\frac{1}{q-1}} dzdt < \infty,\  \text{Note that $\psi=\sin \phi$}
\]
Therefore combining \eqref{v1} and \eqref{v2} we get that
\begin{equation}\label{v4}
\int_{B_r} |u| dzdt= O(r^k),\ \text{as $r \to 0$}
\end{equation}
for all $k>0$. Now again by using  an interpolation inequality of the type
\[
\nrm{u}_{L^2(B_r,  dz dt)} \leq \ \nrm{u}_{L^1(B_r,  dzdt)}^\theta \nrm{u}_{L^{2^*}(B_r,  dzdt)}^{1-\theta},\  \text{ where $\theta  +\frac{1-\theta}{2^*}=\frac{1}{2}$}
\] 
it follows from \eqref{v4}  that $u$ vanishes to infinite order in the $L^{2}$ mean as in Definition \ref{iv}. The claim in the lemma thus follows. 
 \end{proof}

Our first unique continuation result can  now be stated as follows.

\begin{thm}\label{sucpGr}
With $\mathcal{L}$ as in \eqref{grushin}, let  $u \in M^{2,2}(B_{r_0})$ for some $r_0>0$  be a solution to 
\begin{align}\label{sucpBnd}
\mathcal{L}u = Vu \ \text{ in } B_{r_0},
\end{align}
for a potential $V$ satisfying \eqref{conPot}. If $u$ vanishes to infinite order at $(0,0)$ in the sense of \eqref{wk},  then $u \equiv 0$ in $B_{r_0}$.
\end{thm}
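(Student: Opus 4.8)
The plan is to show that $u$ vanishes identically in a fixed ball around the origin by applying the $L^p$--$L^q$ Carleman inequality \eqref{eCarEst} (and \eqref{oCarEst} when $N$ is odd) to a cutoff of $u$, and then to propagate. By Lemma \ref{eq} I may replace the hypothesis \eqref{wk} by the statement that $u$ vanishes to infinite order at $(0,0)$ in the ordinary $L^2$ mean. A preliminary step I would carry out is to upgrade this to the horizontal gradient: a Caccioppoli/energy estimate for $\mcal{L}u=Vu$ (using that $\psi/\rho^2$--type potentials are subcritical) shows $\nabla_H u$ also vanishes to infinite order at $(0,0)$. This is what will let me discard the contributions coming from a neighborhood of the origin.

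Fix a cutoff $\zeta$ with $\zeta\equiv1$ on $B_{R/2}$ and $\operatorname{supp}\zeta\subset B_R$, with $R$ small to be chosen. Since \eqref{eCarEst} is stated only for $f\in C_c^\infty\bct{\ro^{N+1}\setminus\{0\}}$, I would first legitimize applying it to $f=\zeta u$ by a limiting argument: insert an inner cutoff $\eta_\tau$ that vanishes on $B_\tau$ and equals $1$ off $B_{2\tau}$, mollify $\zeta\eta_\tau u$, apply the estimate, and let $\tau\to0$. The commutator $[\mcal{L},\eta_\tau]u$ is supported in $B_{2\tau}\setminus B_\tau$, and after weighting by $\rho^{-s+2}\bct{\sin\phi}^{-\delta}$ it is controlled by $\tau^{-s}$ times local norms of $u$ and $\nabla_H u$; the infinite-order vanishing of $u$ and $\nabla_H u$ makes this tend to $0$ for each fixed $s$. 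The same vanishing guarantees that $\nrm{\rho^{-s}\bct{\sin\phi}^{\delta}\zeta u}_{L^p}$ and the corresponding right-hand norms are finite, so all quantities make sense.

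With the estimate in hand for $\zeta u$, I write $\mcal{L}(\zeta u)=\zeta Vu+[\mcal{L},\zeta]u$, the last term supported in the annulus $B_R\setminus B_{R/2}$. The heart of the argument is absorbing the potential term. By \eqref{conPot}, $\rho^{-s+2}\bct{\sin\phi}^{-\delta}\md{\zeta Vu}\le f(\rho)\bct{\sin\phi}^{-2\delta}\cdot\rho^{-s}\bct{\sin\phi}^{\delta}\md{\zeta u}$, so Hölder with exponents $p$ and $N$ (note $\tfrac1q-\tfrac1p=\tfrac1N$) in the measure $\tfrac{dz\,dt}{\rho^{N+2}}$ gives
\begin{align*}
\nrm{\rho^{-s+2}\bct{\sin\phi}^{-\delta}\zeta Vu}_{L^q}\le\Big(\int_{B_R}f(\rho)^N\bct{\sin\phi}^{-2\delta N}\tfrac{dz\,dt}{\rho^{N+2}}\Big)^{1/N}\nrm{\rho^{-s}\bct{\sin\phi}^{\delta}\zeta u}_{L^p}.
\end{align*}
In the polar coordinates \eqref{pmeasure} the angular part is $\int_0^\pi\bct{\sin\phi}^{\frac{N-2}{2}-2\delta N}d\phi$, which converges \emph{precisely} because $\delta<\tfrac14$, while the radial part $\int_0^R f(\rho)^N\rho^{-1}d\rho\le f(R)^{N-1}\int_0^R f(\rho)\rho^{-1}d\rho$ tends to $0$ as $R\to0$ by the Dini condition \eqref{Dini} (and $f$ nondecreasing). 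Since the Carleman constant $C$ depends only on $\delta,N$ and is independent of both $s$ and $R$, I fix $R$ small enough that $C$ times this factor is $\le\tfrac12$, and absorb the potential term into the left-hand side.

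I am left with $\tfrac12\nrm{\rho^{-s}\bct{\sin\phi}^{\delta}\zeta u}_{L^p}\le C\nrm{\rho^{-s+2}\bct{\sin\phi}^{-\delta}[\mcal{L},\zeta]u}_{L^q}$. On $\operatorname{supp}[\mcal{L},\zeta]\subset B_R\setminus B_{R/2}$ one has $\rho\ge R/2$, so the right side is at most $C\,(R/2)^{2-s}M$ with $M$ finite and independent of $s$. Restricting the left side to $B_{R/4}$, where $\zeta\equiv1$ and $\rho<R/4$, yields $(R/4)^{-s}\nrm{\bct{\sin\phi}^{\delta}u}_{L^p(B_{R/4})}\le C(R/2)^{2-s}M$, i.e. $\nrm{\bct{\sin\phi}^{\delta}u}_{L^p(B_{R/4})}\le C'M(R/2)^2\,2^{-s}$; letting $s\to\infty$ through half-integers forces $u\equiv0$ on $B_{R/4}$. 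A standard propagation/connectedness argument — re-centering the estimate at points $(0,t_0)$ using $t$-translation invariance of $\mcal{L}$, and classical elliptic unique continuation where $z\ne0$ — then upgrades this to $u\equiv0$ on $B_{r_0}$. I expect the main obstacle to be the potential-absorption step: recognizing that the weight mismatch between the two sides produces exactly the factor $\bct{\sin\phi}^{-2\delta}$ whose angular integral is finite only for $\delta<\tfrac14$, the very range of the Carleman estimate, and then exploiting the $s$-independence of $C$ together with the Dini smallness to close the absorption. The accompanying limiting argument justifying the use of the Carleman inequality for $\zeta u$, where the infinite-order vanishing of both $u$ and $\nabla_H u$ is indispensable, is the other delicate point.
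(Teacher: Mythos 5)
Your proposal is correct and follows essentially the same route as the paper's proof: the same $L^p$--$L^q$ Carleman estimate \eqref{eCarEst}/\eqref{oCarEst} applied to a doubly cut-off version of $u$ (your inner cutoff $\eta_\tau$ plays exactly the role of the paper's $\Psi_j$, with the commutator terms killed by the infinite-order vanishing of $u$ and of $\nabla_H u$ via a Caccioppoli estimate), the same H\"older absorption with exponents $p$ and $N$ exploiting the Dini condition, and the same conclusion by letting $s\to\infty$ through half-integers followed by propagation away from the origin, where the paper simply cites \cite{GarShen} for what you spell out as re-centering plus classical elliptic unique continuation. The only differences are cosmetic: the order of the absorption step versus the inner-cutoff limit, and your explicit observation that the angular integral converges precisely when $\delta<\tfrac14$, where the paper just says ``$\delta$ small enough.''
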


\begin{proof}
The proof of this result uses  the  Carleman estimates as    in Theorem \ref{LpLqCarEst}.  First we note that in view of Lemma \ref{eq},  we have   that $u$ vanishes to infinite order in the $L^{2}$ mean  at $(0,0)$. Now as in \cite{GarShen}, we let $\xi \in C_c^\infty(\ro^{N+1})$ such that $\xi = 1$ when $\rho(z,t) \leq \frac{1}{2}$ and $\xi = 0$ when $\rho(z,t)\geq \frac{3}{4}$. Also,  we define $\Psi_j(\rho) = \Psi(j\rho)$, where $\Psi = 1-\xi$.  

\medskip

 We first consider the case when  $N$  is even. Without loss of generality, we may assume for simplicity that $r_0=1$. Using a  standard limiting argument (i.e. by approximation with smooth functions), one can show that the  Carleman estimate \eqref{eCarEst} holds for   $f= \xi\Psi_ju$. This  gives 

\begin{align}\label{CarEstsucpGr}
\nrm{\rho^{-s} \bct{\sin\phi}^\delta \xi \Psi_ju}_{L^p\bct{\ro^{N+1}, \frac{dzdt}{\rho^{N+2}}}} &\leq C \nrm{\rho^{-s+2} \bct{\sin\phi}^{-\delta}  \mcal{L}(\Psi_ju)}_{L^q\bct{B_r, \frac{dzdt}{\rho^{N+2}}} } \notag\\ &+\nrm{\rho^{-s+2} \bct{\sin\phi}^{-\delta}  \mcal{L}(\xi u)}_{L^q\bct{\rho \geq r, \frac{dzdt}{\rho^{N+2}}} } \\ &:= I +II,
\end{align}
where $0<\frac{3}{4j}<r<\frac{1}{2}$ are constants to be chosen later. For $\delta$ small enough, by using H\"older inequality and \eqref{pmeasure} we conclude that 
\begin{align}\label{2ndCarEstsucpGr}
II \leq C r^{-s+2-\frac{N+2}{q}} \nrm{\mcal{L}(\xi u)}_{L^2(B_1)} \leq C r^{-s+2-\frac{N+2}{q}} \nrm{u}_{M^{2,2}(B_1)}.
\end{align}
 Next, we estimate $I$. For this we note that,
 \begin{align}\label{id1}
 \mcal{L}(\Psi_j u) = \mcal{L}(\Psi_j) u + 2\nabla_z\Psi_j\cdot \nabla_z u + 2|z|^2 \partial_t \Psi_j\cdot\partial_tu+\Psi_j \mcal{L}(u) .
 \end{align}
 Now we note that  the derivatives of $\Psi_j$  are supported in $B_{\frac{3}{4j}} - B_{\frac{1}{2j}}$  and satisfy the following bounds
 \begin{equation}\label{id2}
|\nabla \Psi_j|, |\nabla^2 \Psi_j| \leq C_0 j^{k}\ \text{for some $C_0$ universal  and some $k$  }
\end{equation}

Therefore,   by using \eqref{id1}, \eqref{id2}, the equation \eqref{sucpBnd} satisfied by $u$ and an application of H\"older inequality  for $\delta$ small enough,     we can  estimate I as follows
\begin{align}\label{1stCarEstsucpGr1}
&I \leq C \nrm{\rho^{-s+2} \bct{\sin \phi}^{-\delta} V \Psi_ju}_{L^q(B_r,\frac{dzdt}{\rho^{N+2}})} + Cj^M \bct{\int_{B_{\frac{3}{4j}}- B_{\frac{1}{2j}}}|u|^2dz dt}^\frac{1}{2}   \\ 
   &+ Cj^M \bct{\int_{B_{\frac{3}{4j}}- B_{\frac{1}{2j}}}\bct{|\nabla_zu|^2+|z|^2|\partial_tu|^2}dzdt}^\frac{1}{2}
   \notag
\end{align}
where $M>0$ is a constant which depends on $s$. Now  using the following variant of the Caccioppoli inequality  
\begin{equation}\label{ca}
\int_{B_{d} - B_{d/2}} ( |\nabla_z u|^2 +|z|^2 |\partial_t u|^2 )dz dt \leq \frac{C}{d^2}  \int_{B_{2d} - B_{d/4}}  (u^2 + |V| |u|^2) dz dt,\ d>0,
\end{equation}
 
 the vanishing to infinite order property  of $u$ and  the growth assumption on $V$ as in \eqref{conPot}, we can conclude that 
 \begin{equation}\label{z}
 Cj^M \bct{\int_{B_{\frac{3}{4j}}- B_{\frac{1}{2j}}}|u|^2dz dt}^\frac{1}{2}   
   + Cj^M \bct{\int_{B_{\frac{3}{4j}}- B_{\frac{1}{2j}}}\bct{|\nabla_zu|^2+|z|^2|\partial_tu|^2}dzdt}^\frac{1}{2}  \to 0\ \text{as}\ j \to \infty
   \end{equation}

Also, using H\"older inequality ( since $\frac{1}{p}= \frac{1}{q} - \frac{1}{N}$),  we can estimate the quantity 
\[
\nrm{\rho^{-s+2} \bct{\sin \phi}^{-\delta} V \Psi_ju}_{L^q(B_r,\frac{dzdt}{\rho^{N+2}})}
\]
 in \eqref{1stCarEstsucpGr1}
using the growth assumption on $V$ as in  \eqref{conPot} in the  following way,
\begin{align}\label{lqMainEst}
C &\nrm{\rho^{-s+2} \bct{\sin \phi}^{-\delta} V \Psi_ju}_{L^q(B_r,\frac{dzdt}{\rho^{N+2}})}\notag \\ &\leq C \nrm{\bct{\sin \phi }^{-2\delta} f(\rho)}_{L^N(B_r, \frac{dzdt}{\rho^{N+2}})}  \nrm{\rho^{-s} \bct{\sin \phi}^\delta \Psi_ju}_{L^p\bct{B_r,\frac{dzdt}{\rho^{N+2}}}}
\end{align}
Therefore, plugging \eqref{lqMainEst}  and \eqref{z} into \eqref{1stCarEstsucpGr1} and then also by using the estimate for II as in  \eqref{2ndCarEstsucpGr} we get from \eqref{CarEstsucpGr} (after passing $j\rightarrow \infty$) that the following inequality holds 

\begin{align}\label{finCarEst}
\nrm{\rho^{-s} \bct{\sin \phi}^\delta u}_{L^p\bct{B_r,\frac{dzdt}{\rho^{N+2}}}}   &\leq C \nrm{\bct{\sin \phi }^{-2\delta}  f(\rho)}_{L^N(B_r, \frac{dzdt}{\rho^{N+2}})} \nrm{\rho^{-s} \bct{\sin \phi}^\delta u}_{L^p\bct{B_r,\frac{dzdt}{\rho^{N+2}}}} \\& + C r^{-s+2-\frac{N+2}{q}} \nrm{u}_{M^{2,2}(B_1)}\notag.
\end{align}
Now using  $dzdt= \frac{1}{2}(\sin \phi)^{\frac{N-2}{2}} \rho^{N+1} d\rho d\phi d\omega$, we get

\begin{align}\label{sm1}
& \nrm{\bct{\sin \phi }^{-2\delta} f(\rho)}_{L^N(B_r, \frac{dzdt}{\rho^{N+2}})} = \left(\frac{1}{2} \int_{0}^r \frac{f(\rho)^N}{\rho} d\rho \int_{S^{N-1}} d\omega \int_{0}^{\pi}  (\sin \phi)^{-2 \delta N + \frac{N-2}{2}} d\phi\right)^{1/N}
\\
& \leq   \left(\frac{1}{2} \int_{0}^r \frac{f(\rho)}{\rho} d\rho \int_{S^{N-1}} d\omega \int_{0}^{\pi}  (\sin \phi)^{-2 \delta N + \frac{N-2}{2}} d\phi \right)^{1/N},\ \text{for $r$ small enough such that $f(r) \leq 1$}.
\notag
\end{align}

At this point using Dini integrability of $f$ as in  \eqref{Dini}, it follows from \eqref{sm1} that if we   choose $r, \delta $ small enough, then one can ensure that  

\[
C \nrm{\bct{\sin \phi }^{-2\delta} f(\rho)}_{L^N(B_r, \frac{dzdt}{\rho^{N+2}})} <\frac{1}{2}
\]
and therefore  the term $C\nrm{\bct{\sin \phi }^{-2\delta} f(\rho)}_{L^N(B_r, \frac{dzdt}{\rho^{N+2}})}\nrm{\rho^{-s} \bct{\sin \phi}^\delta u}_{L^p\bct{B_r,\frac{dzdt}{\rho^{N+2}}}}$  in \eqref{finCarEst} can be absorbed in the left hand side. Consequently we obtain 
\begin{align*}
\nrm{\bct{\frac{\rho}{r}}^{-s}\bct{\sin \phi}^\delta u}_{L^p\bct{B_r, \frac{dzdt}{\rho^{N+2}}}} \leq C \nrm{u}_{M^{2,2}(B_1)}.
\end{align*}
Now with  $s= k+\frac{1}{2}$ by letting  $k\rightarrow \infty$, we conclude $u\equiv 0$ in $B_r$. Outside of $B_r$, we have that $V$ is bounded and therefore we can now apply the results in  \cite{GarShen} to  conclude that $u \equiv 0$ in $B_1$. 

\medskip

 To handle the case when $N\geq 3$  is  odd,   we instead use the Carleman estimates as in \eqref{oCarEst} similar to that in the proof of Theorem 6.4 in \cite{GarShen}. Then by repeating the  same arguments as above, and the fact that 
 
 \begin{align}\label{essEstOdd}
&\nrm{\bct{\sin \phi}^{-\frac{1}{4p}-\delta}v}_{L^q(B_r)} \leq C \nrm{v}_{L^2(B_r)} \text{  for $\delta$ small  }
\end{align}

which follows from H\"older inequality, we obtain  by a limiting argument as before

\begin{align}\label{fin1}
& \nrm{\rho^{-s} \bct{\sin \phi}^{\frac{1}{4p}+ \delta} u}_{L^p\bct{B_r,\frac{dzdt}{\rho^{N+2}}}}   \\ &\leq C \nrm{\bct{\sin \phi }^{-\frac{1}{2p} -2\delta}  f(\rho)}_{L^N(B_r, \frac{dzdt}{\rho^{N+2}})} \nrm{\rho^{-s} \bct{\sin \phi}^{\frac{1}{4p} +\delta }u}_{L^p\bct{B_r,\frac{dzdt}{\rho^{N+2}}}} \notag\\ &+ C r^{-s+2-\frac{N+2}{q}} \nrm{u}_{M^{2,2}(B_1)}\notag.
\end{align}
Now again by using polar coordinates, we observe that for $\delta$ small enough 
\begin{align}\label{fin2}
\nrm{\bct{\sin \phi }^{-\frac{1}{2p} -2\delta}  f(\rho)}_{L^N(B_r, \frac{dzdt}{\rho^{N+2}})}= o(1), \text{ as } r \rightarrow 0. 
\end{align}
Therefore from \eqref{fin1} and \eqref{fin2}, a careful imitation of the previous proof will yield 
\begin{align*}
\nrm{\rho^{-s} \bct{\sin \phi}^{\frac{1}{4p}+\delta} u}_{L^p\left(B_r,\frac{dzdt}{\rho^{N+2}}\right)} \leq C r^{-s+2-\frac{N+2}{q}} \nrm{u}_{M^{2,2}(B_1)}.
\end{align*}
From here we can use the same arguments as in the case of $N\geq 2$ even to conclude the result for the case of $N\geq 3$ odd. This completes the proof.

\end{proof}
The next unique continuation result concerns the Hardy type growth assumption for the potential $V$ as in \eqref{conPot1}.

\begin{thm}\label{sucp2}
With $\mathcal{L}$ as in \eqref{grushin}, let  $N$ be even, $r_0>0$ and $u \in M^{2,2}\bct{B_{r_0}}$  be a solution to 
\begin{align}\label{sucpBnd1}
\mathcal{L}u = Vu \ \text{ in } B_{r_0}, 
\end{align}
for a potential $V$ satisfying \eqref{conPot1} for some $\ep>0$. If $u$  vanishes to infinite order at $(0,0)$ in the sense of \eqref{wk}, then $u \equiv 0$ in $B_{r_0}$.
\end{thm}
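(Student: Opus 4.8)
The plan is to mirror the structure of the proof of Theorem \ref{sucpGr}, but now using the $L^2$--$L^2$ Carleman estimate \eqref{eL2L2CarIneq} of Theorem \ref{L2L2CarEst} in place of the $L^p$--$L^q$ estimate, together with the Pan-type argument driven by the explicit $\frac{C\log_2 s}{s}$ decay of the constant. First, by Lemma \ref{eq} the hypothesis that $u$ vanishes to infinite order in the sense of \eqref{wk} is equivalent to vanishing to infinite order in the $L^2$ mean, which is the form I will actually use. As before I would fix a cutoff $\xi$ supported near the origin and the sequence $\Psi_j(\rho)=\Psi(j\rho)$ with $\Psi=1-\xi$ to excise a neighborhood of the singular point, apply \eqref{eL2L2CarIneq} to $f=\xi\Psi_j u$, and split the right-hand side into an interior term $I$ coming from $\mcal{L}(\Psi_j u)$ on $B_r$ and an exterior term $II$ supported on $\{\rho\ge r\}$. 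The commutator terms in the identity $\mcal{L}(\Psi_j u)=\mcal{L}(\Psi_j)u+2\nabla_z\Psi_j\cdot\nabla_z u+2|z|^2\partial_t\Psi_j\cdot\partial_t u+\Psi_j\mcal{L}(u)$ are killed in the limit $j\to\infty$ exactly as in \eqref{z}, using the Caccioppoli inequality \eqref{ca} and the infinite-order vanishing of $u$.

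The key difference from Theorem \ref{sucpGr} lies in absorbing the potential term. Using the equation $\mcal{L}u=Vu$ and the growth bound \eqref{conPot1}, namely $|V|\le C\psi^\epsilon/\rho^2=C(\sin\phi)^\epsilon/\rho^2$ by the identity \eqref{iden}, the principal contribution to $I$ is of the form
\[
\nrm{\rho^{-s+2}(\sin\phi)^{-\delta} V\,\xi\Psi_j u}_{L^2(B_r,\frac{dzdt}{\rho^{N+2}})}\le C\nrm{\rho^{-s}(\sin\phi)^{-2\delta+\epsilon}\,\xi\Psi_j u}_{L^2(B_r,\frac{dzdt}{\rho^{N+2}})}.
\]
Here is where the degenerate weight forces a delicate choice: unlike \eqref{finCarEst}, where the weight $f(\rho)$ was made small by Dini integrability and $r$ small, now the factor $(\sin\phi)^{\epsilon-2\delta}$ must be compared with the weight $(\sin\phi)^{\delta}$ appearing on the left of \eqref{eL2L2CarIneq}. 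Choosing $\delta$ small relative to $\epsilon$ (so that $\epsilon-2\delta>0$ and in particular $(\sin\phi)^{\epsilon-2\delta}\le (\sin\phi)^{-\delta}$ fails, forcing a more careful bookkeeping), I would bound the potential term by $C\,\nrm{\rho^{-s}(\sin\phi)^{\delta}u}_{L^2}$ up to a constant controlled by the supremum of $(\sin\phi)^{\epsilon-3\delta}$, which is finite precisely because $\sin\phi\le 1$ and $\epsilon-3\delta\ge 0$ for $\delta$ small. Plugging the Carleman constant $\frac{C\log_2 s}{s}$ in front, the potential term acquires a coefficient $\frac{C\log_2 s}{s}$, which tends to $0$ as $s\to\infty$; hence for $s$ large (along $s=k+\tfrac12$, $k\to\infty$) it can be absorbed into the left-hand side.

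After absorption, one is left with an inequality of the shape
\[
\nrm{\bct{\tfrac{\rho}{r}}^{-s}(\sin\phi)^{\delta}u}_{L^2(B_r,\frac{dzdt}{\rho^{N+2}})}\le C\,r^{-s+2-\frac{N+2}{2}}\nrm{u}_{M^{2,2}(B_1)},
\]
and letting $s=k+\tfrac12\to\infty$ forces $u\equiv 0$ on $B_r$ by the standard comparison of powers of $\rho/r$ on $B_{r/2}$. Finally, outside $B_r$ the potential is bounded, so the results of \cite{GarShen} propagate the vanishing to all of $B_{r_0}$. The main obstacle I anticipate is the absorption step: because the Carleman weight $(\sin\phi)^{\pm\delta}$ and the potential weight $(\sin\phi)^{\epsilon}$ interact multiplicatively, one cannot simply make the potential coefficient small by shrinking $r$ (as the Dini argument did), and must instead extract smallness entirely from the asymptotic decay $\frac{\log_2 s}{s}\to 0$ of the Carleman constant. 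This requires that $\delta$ be chosen small enough, depending on $\epsilon$, so that the residual power of $\sin\phi$ stays nonnegative and the supremum bound is legitimate; this coupling of $\delta$ and $\epsilon$ is the delicate modification alluded to in the introduction, and it is exactly why the $L^2$--$L^2$ estimate with its quantitative constant, rather than the scale-invariant $L^p$--$L^q$ estimate, is indispensable here.
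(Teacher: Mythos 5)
Your skeleton is the paper's own: Lemma \ref{eq} to pass to $L^2$ vanishing, the cutoffs $\xi\Psi_j$, the estimate \eqref{eL2L2CarIneq} applied to $f=\xi\Psi_ju$ with $\delta$ coupled to $\ep$, absorption of the potential term using only the decay of $\frac{C\log_2 s}{s}$ along $s_k=k+\frac{1}{2}$, and propagation of the vanishing outside $B_r$ via \cite{GarShen}. (The paper's choice is cleaner: it takes $\delta=\frac{\ep}{2}$, so that $(\sin\phi)^{-\ep/2}\md{V}\rho^2\leq C(\sin\phi)^{\ep/2}$ and the weight produced on the right is \emph{exactly} the weight on the left, with no leftover power; note also your bookkeeping has a slip --- $(\sin\phi)^{-\delta}\cdot(\sin\phi)^{\ep}=(\sin\phi)^{\ep-\delta}$, not $(\sin\phi)^{\ep-2\delta}$ --- though this is harmless since you take $\delta$ small relative to $\ep$.) Your diagnosis that smallness must come from the Carleman constant rather than from shrinking $r$ is also correct.

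The genuine gap is in your treatment of the terms on the right-hand side of \eqref{eL2L2CarIneq} that do \emph{not} involve $V$. In the $L^p$--$L^q$ proof of Theorem \ref{sucpGr}, the weight $(\sin\phi)^{-\delta}$ could be discarded by H\"older precisely because $q=\frac{2N}{N+1}<2$ leaves room to integrate a negative power of $\sin\phi$ against the measure; that is how one arrives at the unweighted quantities in \eqref{z} and at the bound \eqref{2ndCarEstsucpGr} by $\nrm{u}_{M^{2,2}(B_1)}$. In the $L^2$--$L^2$ setting there is no such room, and the weight $(\sin\phi)^{-\ep/2}$ stays inside both the commutator terms and the exterior term. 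Concretely: (a) your claim that the commutator terms are killed ``exactly as in \eqref{z}'' is unjustified; what is actually needed is that $(\sin\phi)^{-\ep}\md{u}^2$ and $(\sin\phi)^{-\ep}\bct{\md{\nabla_zu}^2+|z|^2\md{\partial_tu}^2}$ vanish to infinite order in the $L^1$ mean, and this does not follow from unweighted infinite-order vanishing because $(\sin\phi)^{-\ep}$ is unbounded. The paper proves it by first reducing to $\ep$ small (legitimate since $\sin\phi\leq 1$), then interpolating between $L^2$ and $L^{2^*}$ using the higher integrability \eqref{hgrInt} together with the Caccioppoli inequality \eqref{ca}, and finally applying H\"older against $\int_{B_1}(\sin\phi)^{-\delta'}dzdt<\infty$. (b) Your closing display bounds the exterior contribution by $Cr^{-s+2-\frac{N+2}{2}}\nrm{u}_{M^{2,2}(B_1)}$, but $\nrm{(\sin\phi)^{-\ep/2}\mcal{L}(\xi u)}_{L^2}$ is \emph{not} controlled by the $M^{2,2}$ norm, again because of the degenerate weight. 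The paper needs a separate finiteness argument, \eqref{f10}: since $V$ is bounded away from the origin, $u$ (and hence $\mcal{L}u=Vu$) is locally bounded there by De Giorgi--Nash--Moser theory in this setting \cite{CDG}, and the finiteness then follows from this boundedness, \eqref{hgrInt}, and H\"older. These weighted-norm steps are exactly the ``delicate modifications'' the introduction warns about; without them, both (a) and (b) as you state them would fail.
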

\begin{proof}
As in the proof of Theorem \ref{sucpGr}, we  may assume $r_0=1$ and also that $u$ vanishes to infinite order at $(0,0)$ in the $L^{2}$ mean.  Note that the later fact follows from Lemma \ref{eq}. We  now proceed  as in the proof of Theorem \ref{sucpGr} and consider the same cutoff  functions $\Psi_j$ and $\xi$. Now  by  applying the Carleman estimate as in  \eqref{eL2L2CarIneq} with the  $f=\xi\Psi_ju$  and $\delta=\frac{\ep}{2}$ we obtain
\begin{align}\label{CarEstsucpGr1}
&\nrm{\rho^{-s} \bct{\sin \phi}^\frac{\ep}{2} \xi \Psi_j u}_{L^2\bct{\ro^{N+1},\frac{dzdt}{\rho^{N+2}}}} \notag\\ & \leq \frac{C\log_2s}{s} \nrm{\rho^{-s+2} \bct{\sin \phi}^{-\frac{\ep}{2}} \mcal{L}(\Psi_j u)}_{L^2\bct{B_r,\frac{dzdt}{\rho^{N+2}}}} \notag\\ &+ \frac{C\log_2s}{s} \nrm{\rho^{-s+2} \bct{\sin \phi}^{-\frac{\ep}{2}} \mcal{L}(\xi u)}_{L^2\bct{\rho \geq r,\frac{dzdt}{\rho^{N+2}}}} := I+II,
\end{align}
where $j$ is large enough and $r$ is small enough satisfying $0<\frac{3}{4j}<r<\frac{1}{2}$. Clearly,
\begin{align}\label{2ndCarEstsucpGr1}
II \leq \frac{C\log_2s}{s} r^{-s+2} \nrm{\bct{\sin \phi}^{-\frac{\ep}{2}} \mcal{L}(\xi u)}_{L^2\bct{\rho\geq r,\frac{dzdt}{\rho^{N+2}}}}.
\end{align}
Observe that in the preceding inequality, the term involving the $L^2$ norm is finite, i.e.

\begin{equation}\label{f10}
 \nrm{\bct{\sin \phi}^{-\frac{\ep}{2}} \mcal{L}(\xi u)}_{L^2\bct{\rho\geq r,\frac{dzdt}{\rho^{N+2}}}} < \infty.
 \end{equation}
 This can be seen as follows. We first note that away from the origin, we have that $V$ is bounded. Therefore since $u$ satisfies \eqref{sucpBnd1},  it follows from the De Giorgi-Nash-Moser theory in this setting ( see for instance \cite{CDG})  that   $u$ is bounded away from the origin   and  consequently  the same holds for $\mcal{L}(u)$  as well  because of \eqref{sucpBnd1}. Therefore, using the boundedness of $u, \mcal{L}(u)$, higher integrability of $ \nabla_z{u}, |z|\partial_tu$ (as in  \eqref{hgrInt}), H\"older inequality and the fact that 

\begin{align*}
\int_{\{\rho \geq r\}} \bct{\sin \phi}^{-\delta} dzdt < \infty,
\end{align*}
for any $\delta \in (0,1)$, we can assert that the term involving $L^2$ norm in \eqref{2ndCarEstsucpGr1} is finite for small enough $\ep$. Note that in the growth assumption \eqref{conPot1}, since $\psi=\sin \phi  \leq 1$, therefore in the very first place we can assume that \eqref{conPot1} holds for $\ep$ small enough. 
\par 
Next we estimate $I$.  Again from the following interpolation inequality 
\[
\nrm{u}_{L^q} \leq \ \nrm{u}_{L^2}^\theta \nrm{u}_{L^{2^*}}^{1-\theta},\ \text{ where $\frac{\theta}{2} +\frac{1-\theta}{2^*}=\frac{1}{q}$} 
\]
  and also by using \eqref{hgrInt} we can infer  that since $u$ vanishes to infinite order at $(0,0)$ in the $L^2$ mean, it also vanishes of infinite order in the $L^q$ mean for any $q\in (2,2^*)$. Similarly by using the variant of the Caccioppoli inequality as in \eqref{ca} and interpolation inequality as above,  we can assert  $|\nabla_z u|$ and $|z||\partial_t u|$ vanishes to infinite order at $(0,0)$ in the $L^q$ mean for any $q\in (2,2^*)$. Therefore, combining these with the fact that 
\[
\int_{B_1} \bct{\sin \phi}^{-\delta} dzdt <\infty, \ \forall \delta \in (0,1),
\]
we can conclude  that  $\bct{\sin \phi}^{-\ep}|u|^2$ and $\bct{\sin \phi}^{-\ep}\bct{ \md{\nabla_zu}^2+|z|^2\md{\partial_tu}^2}$ vanishes to infinite order in the $L^1$ mean at  the point $(0,0)$ for small enough $\ep$. Hence, for some large enough $M$  depending on $s$ we have, 

\begin{align}\label{1stCarEstsucpGr11}
I  &\leq \frac{C \log_2s}{s} \nrm{\rho^{-s+2} \bct{\sin \phi}^{-\frac{\ep}{2}} V\Psi_j u}_{L^2\bct{B_r,\frac{dzdt}{\rho^{N+2}}}} +\frac{C\log_2s}{s} j^M \bct{\int_{B_{\frac{3}{4j}-\frac{1}{2j}}} \bct{\sin \phi}^{-\ep} |u|^2 dz dt}^\frac{1}{2} \notag \\ &+ \frac{C\log_2s}{s}  j^M \bct{\int_{B_{\frac{3}{4j}- \frac{1}{2j}}} \bct{\sin \phi}^{-\ep}  \bct{ \md{\nabla_zu}^2+|z|^2\md{\partial_tu}^2}dz dt}^\frac{1}{2} \notag \\ &\leq \frac{C \log_2s}{s} \nrm{\rho^{-s} \bct{\sin \phi}^{\frac{\ep}{2}} \Psi_j u}_{L^2\bct{B_r,\frac{dzdt}{\rho^{N+2}}}} + o(1), \text{ as } j \rightarrow \infty.
\end{align}
To achieve the last inequality we have used the imposed condition \eqref{conPot1} on $V$. Therefore, letting  $j \rightarrow \infty$   and making use of inequalities \eqref{2ndCarEstsucpGr1}, \eqref{1stCarEstsucpGr11} we conclude from  \eqref{CarEstsucpGr1} that the following holds
\begin{align*}
\nrm{\rho^{-s} \bct{\sin \phi}^{\frac{\ep}{2}} u}_{L^2\bct{B_r,\frac{dzdt}{\rho^{N+2}}}} &\leq\frac{C \log_2s}{s} \nrm{\rho^{-s} \bct{\sin \phi}^{\frac{\ep}{2}} u}_{L^2\bct{B_r,\frac{dzdt}{\rho^{N+2}}}} \notag\\&+ \frac{C\log_2s}{s}r^{-s+2} \nrm{\bct{\sin \phi}^{-\frac{\ep}{2}} \mcal{L}(\xi u)}_{L^2\bct{\rho \geq r,\frac{dzdt}{\rho^{N+2}}}}.
\end{align*}
Now  letting $s = s_k: = k+\frac{1}{2}$, for large enough $k$ we have 
\[
 \frac{C\log_2 s_k}{s_k} < \frac{1}{2}
\]
and therefore in the preceding inequality, for large enough $k$, the first term can be absorbed in the left hand side and consequently we will have

\begin{align}\label{2ndLstSucp}
\nrm{\bct{\frac{\rho}{r}}^{-s_k} \bct{\sin \phi}^{\frac{\ep}{2}} u}_{L^2\bct{B_r,\frac{dzdt}{\rho^{N+2}}}} \leq \frac{C \log_2s_k}{s_k} r^2 \nrm{\bct{\sin \phi}^{-\frac{\ep}{2}} \mcal{L}(\xi u)}_{L^2\bct{\rho \geq r,\frac{dzdt}{\rho^{N+2}}}}.
\end{align}
 Now  in view of \eqref{f10}, by  passing to limit $k \rightarrow \infty$ in \eqref{2ndLstSucp} we can assert that $u\equiv 0$ in $B_r$.  The rest of the argument remains the same as in the proof of the previous theorem and hence we can again conclude that $u \equiv 0$ in $B_1$. 

\end{proof}

Now in the case when $N\geq 3$ is odd, because the nature of the Carleman estimate in \eqref{oCarEst} is different, therefore we cannot assert that Theorem \ref{sucp2} holds. However using  the estimate in  \eqref{oCarEst} and a slight adaptation of the previous proof, we can assert that the following  unique continuation property holds.

\begin{thm}\label{sucp4}
With $\mathcal{L}$ as in \eqref{grushin}, let  $N\geq 3$ be odd, $r_0>0$ and $u \in M^{2,2}\bct{B_{r_0}}$  be a solution to 
\begin{align}\label{st}
\mathcal{L}u= Vu \ \text{ in } B_{r_0}, 
\end{align}
for a potential $V$ satisfying 
\[
|V(z,t)| \leq \frac{C (\sin \phi)^{\frac{1}{4}+ \ep}}{\rho(z,t)^2} 
\]

for some $\ep>0$.  If $u$  vanishes to infinite order at $(0,0)$ in the  sense of \eqref{wk}, then $u \equiv 0$ in $B_{r_0}$.
\end{thm}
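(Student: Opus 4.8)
The plan is to mirror the proof of Theorem \ref{sucp2} almost verbatim, with the key adjustment being the use of the odd-dimensional $L^2$--$L^2$ Carleman estimate \eqref{oL2L2CarIneq} in place of the even-dimensional one \eqref{eL2L2CarIneq}. As before, I would assume without loss of generality that $r_0=1$, and I would first invoke Lemma \ref{eq} to upgrade the hypothesis ``$u$ vanishes to infinite order in the sense of \eqref{wk}'' to the statement that $u$ vanishes to infinite order in the $L^2$ mean at $(0,0)$. I would then set up the same cutoffs $\xi$ and $\Psi_j$ as in the proof of Theorem \ref{sucp2}.

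The crucial structural difference is the weight exponent. In the odd case the Carleman estimate \eqref{oL2L2CarIneq} carries the weight $\bct{\sin\phi}^{\frac{1}{8}+\delta}$ on the left and $\bct{\sin\phi}^{-\frac{1}{8}-\delta}$ on the right, rather than $\bct{\sin\phi}^{\pm\delta}$. Squaring the weight discrepancy, the potential term on the right becomes controlled by $\bct{\sin\phi}^{-\frac14-2\delta}\md{V}$ times the $L^2$ quantity with weight $\bct{\sin\phi}^{\frac18+\delta}$. This is precisely why the growth hypothesis here must read $\md{V}\le C(\sin\phi)^{\frac14+\ep}\rho^{-2}$ instead of $(\sin\phi)^{\ep}\rho^{-2}$: the factor $(\sin\phi)^{\frac14}$ exactly compensates for the extra $(\sin\phi)^{-\frac14}$ generated by the weight mismatch in \eqref{oL2L2CarIneq}. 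Concretely, I would apply \eqref{oL2L2CarIneq} to $f=\xi\Psi_j u$ with $\delta=\frac{\ep}{2}$, expand $\mcal{L}(\Psi_j u)$ via the product rule as in \eqref{id1}, and use the equation \eqref{st} to replace $\Psi_j\mcal{L}(u)$ by $\Psi_j V u$. The potential term then becomes
\begin{align*}
\frac{C\log_2 s}{s}\nrm{\rho^{-s+2}\bct{\sin\phi}^{-\frac18-\frac\ep2}V\Psi_j u}_{L^2\bct{B_r,\frac{dzdt}{\rho^{N+2}}}}
\leq \frac{C\log_2 s}{s}\nrm{\rho^{-s}\bct{\sin\phi}^{\frac18+\frac\ep2}\Psi_j u}_{L^2\bct{B_r,\frac{dzdt}{\rho^{N+2}}}},
\end{align*}
where the inequality uses $\bct{\sin\phi}^{-\frac18-\frac\ep2}\md{V}\rho^{2}\le C\bct{\sin\phi}^{\frac18+\frac\ep2}$, which is exactly the hypothesis after rearranging the exponents ($-\frac18-\frac\ep2+\frac14+\ep=\frac18+\frac\ep2$).

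The remaining pieces carry over directly. The commutator terms supported on the annulus $B_{3/4j}\setminus B_{1/2j}$ involve $\bct{\sin\phi}^{-\frac14-\ep}$ weights against $u$, $\nabla_z u$, and $|z|\partial_t u$; since $\frac14+\ep<1$ for small $\ep$, the weight is integrable against $dzdt$ by the polar decomposition \eqref{pmeasure}, and the vanishing-to-infinite-order property (promoted to the relevant $L^q$ and $L^1$ means exactly as in Theorem \ref{sucp2}) forces these terms to $o(1)$ as $j\to\infty$. The boundary term $II$ is finite and of size $O(r^{-s+2})$ by De Giorgi--Nash--Moser boundedness away from the origin together with \eqref{hgrInt}. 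After letting $j\to\infty$, choosing $s=s_k=k+\frac12$ so that $\frac{C\log_2 s_k}{s_k}<\frac12$ for large $k$, I absorb the potential term into the left-hand side, and then letting $k\to\infty$ in the resulting inequality $\nrm{(\rho/r)^{-s_k}(\sin\phi)^{\frac18+\frac\ep2}u}\le \frac{C\log_2 s_k}{s_k}r^2\nrm{(\sin\phi)^{-\frac18-\frac\ep2}\mcal{L}(\xi u)}$ yields $u\equiv0$ in $B_r$. A standard propagation argument using the results of \cite{GarShen} away from the singularity then gives $u\equiv0$ in $B_1$. The only point requiring genuine care—and the one I would flag as the main obstacle—is verifying that the fixed weight exponent $\frac14+\ep$ in the hypothesis leaves enough room: one needs $\frac14+\ep<1$ (hence $\ep<\frac34$, harmless) for annular integrability, and simultaneously needs $\delta=\frac\ep2<\frac14$ to stay within the admissible range of \eqref{oL2L2CarIneq}, so $\ep$ must be taken small; since $(\sin\phi)\le1$ one may shrink $\ep$ freely at the outset, so this is not a real restriction.
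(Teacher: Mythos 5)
Your proposal is correct and is essentially the paper's intended argument: the paper gives no detailed proof of Theorem \ref{sucp4}, stating only that it follows by ``a slight adaptation of the previous proof'' of Theorem \ref{sucp2}, and your write-up is precisely that adaptation, with the exponent bookkeeping ($\delta=\frac{\ep}{2}$, $-\frac18-\frac\ep2+\frac14+\ep=\frac18+\frac\ep2$) done correctly. One point in your favor worth noting: the paper's preamble cites the $L^p$--$L^q$ estimate \eqref{oCarEst}, but that estimate cannot absorb a pure Hardy-type potential (the associated $L^N$ norm over $B_r$ with measure $\frac{dzdt}{\rho^{N+2}}$ diverges logarithmically in $\rho$), so your choice of the $L^2$--$L^2$ estimate \eqref{oL2L2CarIneq}, whose $\frac{C\log_2 s}{s}$ constant permits absorption by taking $s=s_k\to\infty$, is the one that actually makes the argument work.
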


\section{Application to unique continuation on the Heisenberg group $\mathbb{H}^n$}
We recall that on the Heisenberg group $\mathbb{H}^{n}= \mathbb{R}^{2n+1}$,  if   we denote an arbitrary point by  $(x, y, t)= (x_1, ...x_n, y_1, ...y_n, t)$, then   the group operation is defined as  follows
\begin{equation}\notag
(x, y, t) \circ (x', y', t')= (x+x', y+y', t+ t'  + 2(x'.y- x.y') )
\end{equation}
Let $z=(x,y)$. Note that the horizontal vector space   $V_1$ is spanned by 
\begin{align}\label{d1}
 X_i&=\partial_{x_i} +2y_i \partial_t,\ i=1, .., n
\\
X_{n+j}&= \partial_{y_j} - 2x_j \partial_t,\  j=1, .., n
\notag
\end{align}
and the vertical space  $V_2$ is spanned by $\partial_t$.  Infact, $\mathbb{H}^n$ is a protypical  example of Carnot group of step 2. 
As is well known, the following  Hormander bracket generating  condition holds, 
\begin{equation}\notag
[X_i, X_{n+j}]= -4 \delta_{ij} \partial_t, \ \forall i,j \in \{1,\dots,n\}
\end{equation}
and consequently the  sub-Laplacian  defined by
\[
\Delta_\mathbb{H} u= \sum_{i=1}^{2n} X_i^2u 
\]
is hypoelliptic.  Note that in real coordinates we have, 

 \begin{equation}
  \Delta_{\mathbb{H}}u = \Delta_z u + \frac{|z|^2}{4} \partial_{t}^2 u  + \partial_t Tu
  \end{equation}
where
\[
Tu= \sum_{i=1}^n (y_j \partial_{x_j} u - x_j \partial_{y_j}u )
\]

As mentioned in the introduction, $Tu=0$ if and only if \eqref{t} holds and in which case  $\Delta_{\mathbb{H}} u$ is given by 

\begin{equation}\label{i10}
  \Delta_{\mathbb{H}}u = \Delta_z u + \frac{|z|^2}{4} \partial_{t}^2 u  
  \end{equation}
  
  Now  we recall that  in \cite{GLa}, Garofalo and Lanconelli  showed that the following unique continuation results hold.
  \begin{thm}[GL]\label{gl}
  Let $u$ be a solution to 
  \[
  \Delta_{\mathbb{H}}u= Vu
  \]
  such that
  \[
  |t Tu(z,t)| \leq g(\rho(z,t)) |z|^2 |u(z,t)|
  \]  
  
  for some Dini integrable $g$ satisfying \eqref{Dini}.  Now corresponding to $N=2n$ and $\beta=m=1$,  if $V$ satisfies the growth  condition as in \eqref{growth},  then $u$ satisfies the strong unique continuation property at $0$. 
  
  \medskip
  
   If instead $V$ satisfies the growth condition  as in \eqref{gr2} for some $\delta$ small enough, then  there exists $r_0>0$ such that if 
   \[
   \int_{B_r} u^2 \psi= O( exp( -A r^{-\alpha})),\   \text{as $r \to 0^+$}
   \]
   for some $A, \alpha>0$, then $u \equiv 0$ in $B_{r_0}$.

  \end{thm}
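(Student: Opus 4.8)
The plan is to prove Theorem~\ref{gl} by the Almgren-type frequency-function method of \cite{GLa} and \cite{G}, after first reducing $\Delta_{\mathbb{H}}$ to the operator $\mathcal{L}$ studied here. Writing the sub-Laplacian in real coordinates as $\Delta_{\mathbb{H}}u=\Delta_z u+\tfrac{|z|^2}{4}\partial_t^2u+\partial_t Tu$ and rescaling $t\mapsto 2t$, so that $\tfrac{|z|^2}{4}\partial_t^2$ becomes $|z|^2\partial_t^2$ and the Heisenberg gauge becomes $\rho$ as in \eqref{gauge}, the equation $\Delta_{\mathbb{H}}u=Vu$ takes the form $\mathcal{L}u=Vu-c\,\partial_t Tu$ for a fixed constant $c>0$. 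The potential part is harmless: the hypothesis \eqref{growth}, $|V|\le Cf(\rho)\psi/\rho^2$ with $f$ Dini, is exactly the class for which the frequency associated with $\mathcal{L}$ was shown in \cite{G} to stay bounded. All the new difficulty lies in the first-order cross term $\partial_t Tu$, which is absent in the pure Grushin problem and whose sole purpose in the hypotheses is the symmetry bound $|tTu|\le g(\rho)|z|^2|u|$.

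Following \cite{G}, I would introduce the height $H(r)=\int_{\rho=r}u^2\psi\,d\sigma_r$ and the Dirichlet-type energy $D(r)=\int_{\rho<r}\big(|\nabla_z u|^2+|z|^2|\partial_t u|^2\big)\,dzdt$, and study the generalized frequency $N(r)=rD(r)/H(r)$. Its almost monotonicity rests on a Rellich--Ne\v{c}as identity obtained by testing the equation against the anisotropic dilation generator $Zu$, where $Z=z\cdot\nabla_z+2t\,\partial_t$. The key step is that, upon integration by parts, both the potential and the cross term enter only through error terms: the $V$-term is controlled by \eqref{growth} exactly as in \cite{G}, while the cross term pairs $\partial_t Tu$ with $Zu$ and, because $Z$ carries the factor $2t\,\partial_t$, reduces after integration by parts in $t$ to expressions in which $Tu$ is always accompanied by a factor $t$, i.e.\ by the controlled quantity $tTu$. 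Invoking $|z|^2=\psi\rho^2$ from \eqref{gweight}, the symmetry hypothesis then bounds this error by $g(\rho)$ times the good quantities building $D(r)$ and $H(r)$, and the Dini condition \eqref{Dini} makes the accumulated error integrable in $r$.

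These bounds yield almost monotonicity of $N$, hence a doubling inequality for $H$; infinite-order vanishing in the sense of \eqref{wk} is incompatible with doubling unless $u\equiv0$, which proves the first assertion. For the second, critical, part---where $V$ obeys \eqref{gr2} with $\delta$ small---the potential scales exactly like the frequency, so the Rellich identity no longer produces a Dini-integrable error but only a bounded perturbation, provided $\delta$ is small enough; one then obtains finiteness of $N$ rather than almost monotonicity. Boundedness of $N$ forces a polynomial lower bound $H(r)\gtrsim r^{C}$ as $r\to0$, and it is precisely here that the quantitative hypothesis $\int_{B_r}u^2\psi=O(\exp(-Ar^{-\alpha}))$ intervenes: its super-polynomial decay contradicts this lower bound and forces $u\equiv0$ on $B_{r_0}$.

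The step I expect to be the main obstacle is the control of the cross-term error in the Rellich--Ne\v{c}as identity: one must integrate by parts so that $\partial_t Tu$ is traded for $tTu$ without leaving uncontrolled commutators between $T$, $Z$ and $\partial_t$, and then verify that the bound $|tTu|\le g(\rho)|z|^2|u|$ really absorbs into the principal terms of $D(r)$ and $H(r)$ with a Dini-integrable (respectively, sufficiently small) coefficient. Once this is achieved, the monotonicity and doubling machinery of \cite{G} and \cite{GLa} applies and delivers both conclusions.
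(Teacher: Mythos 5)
First, note that this paper does not actually prove Theorem \ref{gl}: it is a recalled result of Garofalo and Lanconelli, cited from \cite{GLa}, and the paper only remarks that the proof there is based on the Almgren-type frequency function approach. So there is no in-paper proof to compare against; your outline has to be judged against \cite{GLa} and \cite{G} themselves. At the level of strategy your plan --- rewrite $\Delta_{\mathbb{H}}u=Vu$ as a Baouendi--Grushin equation with the extra cross term $\partial_t Tu$, run an Almgren frequency / Rellich--Ne\v{c}as argument, absorb $V$ via \eqref{growth} and the cross term via the hypothesis on $tTu$, and conclude by doubling --- is indeed the route the paper attributes to \cite{GLa}, so the first (Dini) assertion is sketched along the correct lines, modulo the cross-term issue below.

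Two genuine gaps remain. (i) Your mechanism for the second assertion is internally inconsistent and proves too much. You first concede that under \eqref{gr2} one only gets ``finiteness of $N$ rather than almost monotonicity,'' and then invoke ``boundedness of $N$'' to deduce a polynomial lower bound $H(r)\gtrsim r^{C}$. Boundedness is precisely what fails for a scaling-critical potential: the error in the frequency differential inequality is now of size $C/r$ rather than Dini-integrable, so integration only yields $N(r)\lesssim r^{-\alpha_0}$ and hence only an exponential lower bound $H(r)\gtrsim \exp(-Cr^{-\alpha_0})$; this is exactly why Theorem \ref{gl} assumes exponential vanishing instead of infinite-order vanishing. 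Were your claim of bounded $N$ correct, \eqref{gr2} with small $\delta$ would give the full strong unique continuation property, contradicting both the statement being proved (Theorem 4.4 of \cite{G}) and this paper's remark that its Theorem \ref{sucp2} is new even for $\epsilon=1$. (ii) The step you yourself flag as the main obstacle is a real gap, not a routine verification: pairing $\partial_t Tu$ with $Zu=z\cdot\nabla_z u+2t\,\partial_t u$ and integrating by parts in $t$ produces, up to boundary terms, $\int Tu\,\partial_t(Zu)$, and since $\partial_t(Zu)=Z(\partial_t u)+2\partial_t u$, one is left with terms such as $\int Tu\,(z\cdot\nabla_z\partial_t u)$ and $\int Tu\,\partial_t u$ in which $Tu$ carries no compensating factor of $t$; the hypothesis $|tTu|\le g(\rho)|z|^2|u|$ therefore does not apply directly to them. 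Handling exactly such terms is the technical core of \cite{GLa}, and your outline leaves it unresolved.
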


  As previously mentioned in the introduction, the proof in \cite{GLa}  is based on Almgren type frequency function approach. Now in the situation of Theorem \ref{gl} when $Tu \equiv 0$, we obtain the following improvement of Theorem \ref{gl}  as a consequence of our unique continuation results Theorem \ref{sucpGr} and Theorem \ref{sucp2} in Section 4. 
  
  \begin{thm}\label{ap}
  Let $u$ be a solution to 
  \[
  \Delta_{\mathbb{H}}u= Vu,\  \text{in}\  B_{r_0}
  \]
 such that $u, X_i u, X_i u, X_iX_j u \in L^{2}(B_{r_0}(0,0))$, $\forall i,j \in \{1,\dots,2n\}$  and  $V$ satisfies either  the growth condition   as in \eqref{conPot} or as in \eqref{conPot1} corresponding to  $\beta=m=1$ and $N=2n$. Moreover assume that $Tu \equiv 0$. Then if $u$ vanishes to infinite order at $0$ in the sense of \eqref{wk}, then $u \equiv 0$ in $B_{r_0}$.
  \end{thm}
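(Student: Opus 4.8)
The plan is to reduce Theorem \ref{ap} to the already-established Grushin results, Theorem \ref{sucpGr} and Theorem \ref{sucp2}, by exploiting the hypothesis $Tu \equiv 0$. The key observation is that when $Tu \equiv 0$, the identity \eqref{i10} shows that the sub-Laplacian $\Delta_{\mathbb{H}}$ acting on $u$ reduces exactly to the Baouendi--Grushin operator (up to the normalization factor of $4$ already noted in the introduction). More precisely, with $N = 2n$ and the coordinates $(z,t)$ on $\mathbb{H}^n$, the condition $Tu \equiv 0$ forces
\[
\Delta_{\mathbb{H}} u = \Delta_z u + \frac{|z|^2}{4}\partial_t^2 u,
\]
which after the harmless rescaling $t \mapsto t/2$ (or equivalently absorbing the factor $4$ into the gauge, as in the passage $\Delta_{\mathbb{H}} u = \mathcal{B}_1 u$ mentioned in the introduction) becomes precisely $\mathcal{L} u = \Delta_z u + |z|^2 \partial_t^2 u$. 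Thus $u$ solves $\mathcal{L} u = \tilde V u$ where $\tilde V$ is the transported potential, and $\tilde V$ continues to satisfy the same type of growth bound (either \eqref{conPot} or \eqref{conPot1}) since the gauge $\rho$ and the weight $\psi$ transform covariantly under this rescaling.

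First I would carefully record the change of variables identifying $\Delta_{\mathbb{H}}$ restricted to $T$-invariant functions with $\mathcal{L}$, checking that the Dini-type bound \eqref{conPot} and the Hardy-type bound \eqref{conPot1} are preserved (the rescaling only alters constants and leaves the exponents of $\rho$ and $\psi = \sin\phi$ untouched). Second, I would verify that the regularity hypothesis $u, X_i u, X_i X_j u \in L^2(B_{r_0})$ together with $Tu \equiv 0$ places $u$ in the space $M^{2,2}(B_{r_0})$ required by the earlier theorems; here one uses that when $Tu \equiv 0$ the horizontal vector fields $X_i = \partial_{x_i} + 2y_i\partial_t$ and $X_{n+j} = \partial_{y_j} - 2x_j\partial_t$ collapse (on $u$) to the pure derivatives $\partial_{z_i}$ and the terms $z_j \partial_t$, so that the Grushin Sobolev norm \eqref{m22nrm} is controlled by the given Heisenberg norms. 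Third, since the vanishing hypothesis is stated in the sense of \eqref{wk} in both settings and the gauge balls coincide under the rescaling, the infinite-order vanishing of $u$ at the origin transfers directly.

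With these reductions in place, the conclusion is immediate: for a potential obeying \eqref{conPot} I would invoke Theorem \ref{sucpGr}, and for one obeying \eqref{conPot1} (with $N = 2n$ even, which is automatic here) I would invoke Theorem \ref{sucp2}, in each case concluding $u \equiv 0$ in $B_{r_0}$.

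The main obstacle I anticipate is not conceptual but bookkeeping: one must be scrupulous in tracking how the normalization factor of $4$ between $\Delta_{\mathbb{H}}$ and $\mathcal{B}_1$ interacts with the gauge $\rho(z,t) = (|z|^4 + 4t^2)^{1/4}$ used throughout Sections 2--4 versus the gauge natural to the Heisenberg sub-Laplacian, and to confirm that after rescaling the growth conditions land exactly in the form required by the hypotheses of Theorems \ref{sucpGr} and \ref{sucp2} rather than merely up to an uncontrolled constant. A secondary subtlety is checking that $Tu \equiv 0$ is propagated correctly so that the second-order terms $X_i X_j u$ genuinely reduce to the Grushin second-order terms without leaving residual $\partial_t$-contributions; since $T$ and the $X_i$ have nontrivial commutators, one should confirm that $Tu \equiv 0$ is enough to make \eqref{i10} hold for the specific solution $u$ under consideration, which it does by the equivalence \eqref{t}.
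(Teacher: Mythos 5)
Your overall strategy --- reduce via \eqref{i10} to the Grushin equation and invoke Theorem \ref{sucpGr} (for \eqref{conPot}) and Theorem \ref{sucp2} (for \eqref{conPot1}, noting $N=2n$ is even) --- is exactly the paper's route, and your attention to the normalization factor of $4$ and the transfer of \eqref{wk} is fine (the paper glosses this point). However, there is a genuine gap in your second step, where you claim that $u, X_iu, X_iX_ju \in L^2$ together with $Tu\equiv 0$ place $u$ in $M^{2,2}(B_{r_0})$ because the horizontal fields ``collapse'' on $u$ to the Grushin fields. This collapse is only valid at \emph{first} order: since $[T,X_j]=X_{n+j}$, one has $T(X_ju)=X_{n+j}u\not\equiv 0$ in general, so $X_ju$ is \emph{not} $T$-invariant and $X_iX_ju$ does not decompose into Grushin second derivatives. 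Concretely, for $n=1$ and $u=u(r,t)$, $r=|z|$, at a point $(r,0,t)$ one computes
\begin{equation*}
X_1X_1u=u_{rr},\qquad X_2X_2u=\frac{u_r}{r}+4r^2u_{tt},\qquad X_1X_2u=-2ru_{rt}-2u_t,\qquad X_2X_1u=2u_t-2ru_{rt},
\end{equation*}
so the Heisenberg Hessian controls only the combination $\frac{u_r}{r}+4r^2u_{tt}$, whereas membership in $M^{2,2}$ (see \eqref{m22nrm}) requires $\partial_{y}^2u=\frac{u_r}{r}$ and $Y_1Y_1u=r^2u_{tt}$ separately in $L^2$. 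Hence the hypothesis of Theorems \ref{sucpGr}/\ref{sucp2} cannot be verified by pointwise algebra, and your black-box application of those theorems is not justified as written.

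The paper circumvents this entirely: it never asserts $u\in M^{2,2}$. Instead it observes that the only place where the proofs of Theorems \ref{sucpGr} and \ref{sucp2} use more than first-order $L^2$ information on $u$ is the higher integrability \eqref{hgrInt} of the Grushin gradient --- needed to show that $\bct{\sin\phi}^{-\ep}\bct{|\nabla_zu|^2+|z|^2|\partial_tu|^2}$ vanishes to infinite order in the $L^1$ mean and to guarantee the finiteness of \eqref{f10} --- and it supplies this ingredient directly from the Heisenberg hypotheses: by the Folland--Stein embedding \cite{FS}, $X_iX_ju\in L^2$ gives $\nabla_Hu\in L^{2^*}_{loc}$, and the first-order identity
\begin{equation*}
\sum_{i=1}^{2n}(X_iu)^2=|\nabla_zu|^2+4|z|^2(\partial_tu)^2,
\end{equation*}
valid precisely because $Tu\equiv 0$, converts this into the required higher integrability of $|\nabla_zu|^2+|z|^2|\partial_tu|^2$; the proofs of the Grushin theorems are then rerun with this substitution. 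To repair your argument you should either adopt this route, or genuinely prove (rather than assert) the $M^{2,2}$ membership, which would require subelliptic regularity theory and not merely the vector-field identity you invoke.
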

  
  \begin{proof}
  In view of the discussion around \eqref{i10} above, we note  that 
  \[
   \Delta_{\mathbb{H}}u = \Delta_z u + \frac{|z|^2}{4} \partial_{t}^2 u,\quad z=(x, y) \in \mathbb{R}^{2n}
   \]
   i.e. upto a normalization factor of $4$, we have that  $  \Delta_{\mathbb{H}}u= \mathcal{B}_1 u$.  Now if $V$ satisfies \eqref{conPot}, we can apply the result in Theorem \ref{sucpGr} to conclude that $u \equiv 0$.

   \medskip

   On the other hand, if $V$ instead  satisfies the growth condition in \eqref{conPot1}, we  apply the result in Theorem \ref{sucp2} to  again conclude that the  desired conclusion  holds. Note that in our case, we have that $N=2n$  and hence Theorem \ref{sucp2} can be applied which corresponds to the case when $N$ is even.  We would however like to direct the attention of the reader to  a subtle aspect in the application of Theorem \ref{sucp2}.  The reader should note that in the proof of Theorem \ref{sucp2}, we needed at an intermediate step that $ \md{\nabla_zu}^2+|z|^2\md{\partial_tu}^2 \in L^{1+\gamma}$ for some $\gamma >0$ in order to assert by an application of H\"older and interpolation type inequality that $\bct{\sin \phi}^{-\ep}\bct{ \md{\nabla_zu}^2+|z|^2\md{\partial_tu}^2}$ vanishes to infinite order in the $L^1$ mean at  the point $(0,0)$ for small enough $\ep$. That was also needed to ensure the finiteness of the quantity in \eqref{f10}.    In our situation, this is guaranteed  by the  fact that since $Tu=0$,  therefore we have
   \[
   \sum_{i=1}^{2n}X_i^2 u=  |\nabla_z u|^2 + |z|^2 (\partial_t u)^2
   \]
   and consequently by  the Folland-Stein embedding ( \cite{FS}),  the higher integrability of $ \md{\nabla_zu}^2+|z|^2\md{\partial_tu}^2$ follows.  The rest of the proof remains the same.   
      
     \end{proof}
 
 \def\cprime{$'$}

 %\label{Bibliography}{}

%\lhead{\emph{Bibliography}} % Change the page header to say "Bibliography"

%\bibliographystyle{plain} % Use the "unsrtnat" BibTeX style for formatting the Bibliography

%\bibliography{ref} % The references (bibliography) information are stored in the file named "Bibliography.bib"

\begin{thebibliography}{99}

\bibitem[ABV]{ABV}
W. Amrein, A. Berthier \& V. Georgescu, \emph{$L^{p}$ inequalities for the Laplacian and unique continuation}, Ann. Inst. Fourier, Grenoble, 31-3 (1981), 153-168. 

\bibitem[AKS]{AKS}
Aronszajn, N. , Krzywicki, A \& Szarski, J. \emph{A unique continuation theorem for exterior differential forms on Riemannian manifolds},Ark. Mat. 4 1962 417-453 (1962).
\bibitem[A]{Al}
F. Almgren, \emph{Dirichlet's problem for multiple valued functions and the regularity of mass minimizing integral currents}, Minimal submanifolds and geodesics (Proc. Japan-United States Sem., Tokyo, 1977), pp. 1-6, North-Holland, Amsterdam-New York, 1979

\bibitem[B]{B}
A. Banerjee, \emph{Sharp vanishing order of solutions to stationary Schrodinger equations on Carnot groups of arbitrary step},  J. Math. Anal. Appl. 465 (2018), no. 1, 571-587
\bibitem[BG]{BG}
A. Banerjee  \& N. Garofalo \emph{Quantitative uniqueness for zero-order perturbations of generalized Baouendi-Grushin operators}, Rend. Istit. Mat. Univ. Trieste 48 (2016), 189-207.
\bibitem[Ba]{Ba}
M. S. Baouendi, \emph{Sur une classe d'operateurs elliptiques degeneres. (French) }, Bull. Soc. Math. France 95 1967 45-87. 
\bibitem[Bah]{Bah}
H. Bahouri, \emph{Non prolongement unique des solutions d'opérateurs "somme de carrés" (French) [Failure of unique continuation for "sum of squares" operators]}, Ann. Inst. Fourier (Grenoble) \textbf{36}~(1986) 137-155. 
\bibitem[C]{C}
T. Carleman, \emph{Sur un probleme d'unicite pur les systemes d'equations aux derivees partielles a deux variables independantes}, Ark. Mat., Astr. Fys. 26, (1939). no. 17, 9 pp

\bibitem[ChS]{ChS}
S. Chanillo \& E. Sawyer, \emph{Unique continuation for $\Delta + \nu$ and the C. Fefferman-Phong class}, Trans. Amer. Math. Soc. 318 (1990), no. 1, 275-300. 
\bibitem[CDG]{CDG}
L. Capogna, D. Danielli \& N. Garofalo, \emph{ An embedding theorem and the Harnack inequality
for nonlinear subelliptic equations}, quations, Comm. P.D.E 18 (1993), 1765-1794. 
\bibitem[CSS]{CSS}
L. Caffarelli, S. Salsa \& L. Silvestre, \emph{Regularity estimates for the solution and the free boundary of the obstacle problem for the fractional Laplacian}, Invent. Math. 171 (2008), no. 2, 425-461.

\bibitem[FGW]{FGW}
B. Franchi, C. Gutierrez \& R. Wheeden, \emph{Two-weight Sobolev-Poincare inequalities and Harnack inequality for a class of degenerate elliptic operators}, Atti Accad. Naz. Lincei Cl. Sci. Fis. Mat. Natur. Rend. Lincei (9) Mat. Appl. 5 (1994), no. 2, 167-175. 
  \bibitem[FL]{FL}
B. Franchi \& E. Lanconelli, \emph{H\"older regularity theorem for a class of linear nonuniformly elliptic operators with measurable coefficients}, Ann. Scuola Norm. Sup. Pisa Cl. Sci. (4) 10 (1983), no. 4, 523-541.
\bibitem[FS]{FS}
G. Folland \& E. Stein, \emph{Estimates for $\bar{\partial_b}$ complex and Analysis on the Heisenberg group}, p, Comm.
Pure Appl. Math., 27 (1974), 429-522.

\bibitem[E]{E}
L. Escauriaza, \emph{Carleman inequalities and the heat operator I}, Duke Math. J. 104 (2000), no. 1, 113-127. 

\bibitem[EV]{EV}
L. Escauriaza \& L. Vega \emph{Carleman inequalities and the heat operator II}, Indiana Univ. Math. J. 50 (2001), no. 3, 1149-1169.

\bibitem[ESS]{ESS}
L. Escauriaza, G. Seregin \& V. Sverak, \emph{Backward uniqueness for parabolic equations}, Arch. Ration. Mech. Anal. 169 (2003), no. 2, 147-157.
\bibitem[G]{G}
N. Garofalo, \emph{Unique continuation for a class of elliptic operators which degenerate on a manifold of arbitrary codimension}, J. Differential Equations 104 (1993), no. 1, 117-146. 

\bibitem[GL1]{GL1}
N. Garofalo \& F. Lin, \emph{Monotonicity properties of variational integrals, $A_p$ weights and unique continuation}, Indiana Univ. Math. J. \textbf{35}~(1986),  245-268.

\bibitem[GL2]{GL2}
\bysame, \emph{Unique continuation for elliptic  operators: a geometric-variational approach}, Comm. Pure Appl. Math. \textbf{40}~ (1987),  347-366.

\bibitem[GLa]{GLa}
N. Garofalo \& E. Lanconelli,  \emph{Frequency functions on the Heisenberg group, the uncertainty principle and unique continuation}, Ann. Inst. Fourier(Grenoble), \textbf{40}-(1990), 313-356. 

\bibitem[Gr1]{Gr1}
V. V. Gru$\check{s}$in, \emph{A certain class of hypoelliptic operators}, (Russian) Mat. Sb. (N.S.) \textbf{83} (125)~(1970), 456-473.

\bibitem[Gr2]{Gr2}
\bysame, \emph{A certain class of elliptic pseudodifferential operators that are degenerate on a submanifold}, (Russian) Mat. Sb. (N.S.) \textbf{84} (126)~(1971), 163-195.
\bibitem[GR]{GR}
N. Garofalo \& Kevin Rotz, \emph{Properties of a frequency of Almgren type for harmonic functions in Carnot groups},  Calc. Var. Partial Differential Equations \textbf{54}~ (2015), no. 2, 2197-2238.
\bibitem[GS]{GarShen}
N.Garofalo \& Z. Shen, \emph{Carleman estimates for a subelliptic operator and unique
  continuation}, Ann. Inst. Fourier (Grenoble), 44(1):129-166, 1994
  
  
\bibitem[GV]{GV}
N. Garofalo \& D. Vassilev, \emph{Strong unique continuation properties of generalized Baouendi-Grushin operators.}, Comm. Partial Differential Equations \textbf{32}~ (2007), no. 4-6, 643-663.   
  \bibitem[J]{J}
  D. Jerison,  \emph{Carleman inequalities for the Dirac and Laplace operators and unique continuation},   Adv. in Math. 62 (1986), no. 2, 118-134.  
  \bibitem[JK]{JK}
  D. Jerison \& C. Kenig, \emph{Unique continuation and absence of positive eigenvalues for Schrodinger operators}, Ann. of Math. (2) 121 (1985), no. 3, 463-494.  
  
  \bibitem[KPS]{KPS}
  Koch, H, Petrosyan, A. \& Shi, W. \emph{Higher regularity of the free boundary in the elliptic Signorini problem},   Nonlinear Anal. 126 (2015), 3-44.
  

    \bibitem[KT]{KT}
  H. Koch \& D. Tataru, \emph{Carleman estimates and unique continuation for second-order elliptic equations with nonsmooth coefficients}, Comm. Pure Appl. Math. 54 (2001), no. 3, 339-360.  
  
  \bibitem[Pa]{Pa}
  Y. Pan, \emph{Unique continuation for Schrodinger operators with singular potentials}, Comm. Partial Differential Equations 17 (1992), no. 5-6, 953-965.  
  
  \bibitem[Po]{Po}
C. C. Poon, \emph{Unique continuation for parabolic equations}, Comm. Partial Differential Equations \textbf{21}~ (1996), no. 3-4, 521-539.  
  \bibitem[Xu]{Xu}
  C. J. Xu, \emph{Subelliptic variational problems}, Bull. Soc. Math. France, \textbf{118}~(1990), 147-169. 
      \end{thebibliography}

\end{document}